\theoremstyle{plain}
\newtheorem{theorem}{Theorem}[section]
\newtheorem{lemma}[theorem]{Lemma}
\newtheorem{proposition}[theorem]{Proposition}
\newtheorem{corollary}[theorem]{Corollary}
\theoremstyle{definition}
\newtheorem{definition}[theorem]{Definition}
\newtheorem{remark}[theorem]{Remark}
\newtheorem{example}[theorem]{Example}
\newcommand{\I}{\item}
\newcommand{\II}{\begin{enumerate}}
	\newcommand{\III}{\end{enumerate}}
\newcommand{\vsqu}[1][black]{
	\begin{tikzpicture}
		\draw[color=#1, rotate=45] (0,0) rectangle (0.2,0.2);
	\end{tikzpicture}
}
\newcommand{\dda}{\mathord{\mbox{\makebox[0pt][l]{\raisebox{-.4ex}{$\downarrow$}}$\downarrow$}}}
\newcommand{\ua}{\mathord{\uparrow}}
\newcommand{\rom}[1]{\rm{\uppercase\expandafter{\romannumeral #1}}}
\def\ps@pprintTitle{%
	\let\@oddhead\@empty
	\let\@evenhead\@empty
	\def\@oddfoot{\reset@font\hfil\thepage\hfil}
	\let\@evenfoot\@oddfoot
}
\begin{document}
	
	\begin{frontmatter}
		
		\title{Co-Noetherian spaces}

		\author{Li Xiangrui}
		\ead{17731912783@163.com}
		\author{Li Qingguo\corref{a1}}
		\ead{liqingguoli@aliyun.com}
		\address{School of Mathematics, Hunan University, Changsha, Hunan, 410082, China}
		\cortext[a1]{Corresponding author.}
	\begin{abstract}
		
		In non-Hausdorff topology, many spaces exhibit significant separation properties, such as sober spaces, well-filtered spaces and d-spaces. These properties serve to fundamentally classify $T_0$ topological spaces. In this paper, we introduce and study a new class of topological spaces called co-Noetherian spaces, which can refine the classification of $T_0$ spaces. We discuss some basic properties of co-Noetherian spaces and obtain an equivalent characterization of compactness under the strong topology. Additionally, we investigate the connections among KC-spaces, strong R-spaces and co-Noetherian spaces. Moreover, we establish an equivalence between the category of $T_0$ co-Noetherian spaces with continuous mappings and a subcategory of the poset category. Finally, we provide counterexamples to show that the Hoare powerspace of a $T_0$ space may fail to be co-Noetherian, and that the Smyth powerspace of a co-Noetherian space need not be co-Noetherian.
\end{abstract}
		
		\begin{keyword}
		 Noetherian spaces, KC-spaces, strong R-spaces, strong topology, patch topology.
			\MSC 54D10, 54B20, 06B35, 06F30.
			
		\end{keyword}
	\end{frontmatter}
	\section{Introduction}
	
	In general topology, numerous separation axioms have been extensively studied, with particular emphasis on the properties of Hausdorff spaces. In recent years, there has been growing research interest in domain theory and non-Hausdorff topological spaces, focusing on sobriety, well-filteredness, d-spaces (also known as monotone convergence spaces), and several other separation axioms between $T_0$ and $T_2$.
	
	Sobriety is one of the most important and useful properties of non-Hausdorff topological spaces. It has been used in the characterizations of spectral spaces of commutative rings and the spaces which are determined by their open set lattices, see \cite{Hochster-1969}. With the development of domain theory, several significant classes of spaces have emerged from the study of sobriety: such as well-filtered spaces and d-spaces  (see \cite{abramsky1995domain,Gier-2003,Goubault-2013}). To better characterize spaces lying between $T_2$ and $T_0$, several concepts have been introduced: strong d-spaces in \cite{xu-zhao-2020}, R-spaces in \cite{xu-2016}, strongly well-filtered spaces in \cite{xu-2025}, and strong R-spaces in \cite{xu-yang-2025}. Extensive research has been conducted on these spaces.  Research on the d-completion, well-filterification and sobrification of $T_0$ spaces are constructed in \cite{Hoffmann-1979b,Keimel-2009,Liu-2020,Zhang-2017}. Discussions on certain hyperspaces of d-spaces, well-filtered spaces and sober spaces, etc. can be found in \cite{Heck-90,Heck-91,Heck-13,lyu-chen-jia-2022,xu-yang-2025}. The relationships among d-spaces, well-filtered spaces and sober spaces, etc. have been investigated in \cite{Xu-2024a,Xu-2024b,li-yuan-zhao-2020,Shen-2019,xu-2025}.
	
	
	In the study of the spaces mentioned above, considerable attention has been paid to compact saturated sets, and their investigation continues to be actively pursued in topology. For example, they are integral to the definitions of the patch topology and the Smyth powerspace (see \cite{Gier-2003,Goubault-2013,Johnstone}). Furthermore, Noetherian spaces form another important class of topological spaces that is closely related to compact saturated sets. The notion of Noetherianity originates from Noetherian rings, where every ideal is finitely generated (see \cite{Matsumura}). This ascending chain condition has applications in algebra, topology, and lattice theory (see \cite{Davey,Hartshorne,Matsumura}). In topology, this leads to the definition of a Noetherian space: a space whose lattice of open sets satisfies the ascending chain condition (see \cite{Hartshorne}).

	
	In fact, for Noetherian space, each open set can be regarded as a compact element in the lattice of open sets. 
	However, by duality, research on compact elements in the lattice of closed sets remains rather limited. For this reason, in this paper, we introduce a new class of topological spaces called co-Noetherian spaces, in which all closed sets are compact elements in the lattice of closed sets. The introduction of such a space allows for a more refined classification of $T_0$ spaces. We discuss some basic properties of co-Noetherian spaces and obtain an equivalent characterization of compactness under the strong topology. Additionally, we investigate the connections among KC-spaces, strong R-spaces and co-Noetherian spaces. Moreover, we establish an equivalence between the category of $T_0$ co-Noetherian spaces with continuous mappings and a subcategory of the poset category. Finally, we provide counterexamples to show that the Hoare powerspace of a $T_0$ space may fail to be co-Noetherian, and that the Smyth powerspace of a co-Noetherian space need not be co-Noetherian.


	
	
	\section{Preliminary}
	
	This section is devoted to a brief review of some basic concepts and notations to be used in the paper.

	Let $P$ be a poset. A nonempty subset $D$ of $P$ is \emph{directed} if every two points in $D$ have an upper bound in $D$. We write $D\subseteq^\uparrow P$ to indicate that $D$ is a $directed~subset$ of $P$. Similarly, we use $F\subseteq_{fin}P$ to indicate that $F$ is a finite subset of $P$. For any subset $A\subseteq P$, let $\uparrow A=\{x\in P:x\geq a~for~some~a\in A\}$ and $\downarrow A=\{x\in P:x\le a~for~some~a\in A\}$. Specifically, we write $\uparrow x=\uparrow\{x\}$ and $\downarrow x=\{x\}$. We call $A\subseteq P$ an \emph{upper set} (resp., \emph{lower set}) if $A=\uparrow A$ (resp., $A=\downarrow A$). A lower set $I$ is called a $principal~ideal$, if $I=\downarrow x$ for some $x\in P$.
	
	A poset $P$ is called a \emph{directed complete poset} (\emph{dcpo} for short), if $\bigvee D$ exists, for any $D\subseteq^\uparrow P$. A poset $P$ is called a \emph{semilattice }(resp., \emph{sup-semilattice}), if the infimum $\inf\{x,y\}=x\wedge y$ exists (respectively, the supremum $\sup\{x,y\}=x\vee y$ exists) for each pair $x,y\in P$. A poset is called a \emph{complete lattice} (resp., \emph{sup-complete poset}), if the supremun $\sup A=\bigvee A$ exists for each (resp., nonempty) subset $A$ of $P$.
	
	Let $P$ be a poset and $x,y\in P$. We say that $x$ is \emph{way-below}  $y$, in symbols $x\ll y$, iff for all directed subsets $D\subseteq L$ for which $\bigvee D$ exists, the relation $y\leqslant \bigvee D$ always implies the existence of a $d\in D$ with $x\leqslant d$. In particular, $x$ is called a \emph{compact element}, if $x\ll x$.
	
	For each $x\in P$, we denote by $\dda x$ the set of all elements are \emph{way below} $x$. A dcpo $P$ is called a \emph{continuous dcpo} (\emph{domain} for short) if it satisfies the $axiom$ of $approximation$:
	\begin{center}
		\emph{(${\forall} x\in P$)}$x=\bigvee^{\ua}\dda x$,
	\end{center}
	i.e. for all $x\in L$, the set $\dda x=\{u\in L\mid u\ll x\}$ is directed and $x=\bigvee \{u\in L\mid u\ll x\}$.
	
	A poset is called \emph{Noetherian}, if it satisfies the \emph{ascending chain condition}: every ascending chain has a greatest member. For a Noetherian poset, we have the following remarks. It implies that all of Noetherian posets are domains.
	
	\begin{remark}
		For a poset $P$, the following conditions are equivalent:
		\II
		\I[(1)] $P$ is Noetherian.
		\I[(2)] $P$ is a dcpo and all elements of $P$ are compact.
		\III
	\end{remark}
	
	See \cite{Gier-2003}, there are some topologies on the poset $P$. The \emph{upper topology} on the poset $P$ is generated by the complements of $\{\downarrow x:x\in P\}$ and denote it by $\upsilon(P)$. A subset $U$ of $P$ is \emph{Scott open} if $U=\uparrow U$ and any directed set $D$ for which $\bigvee D$ exists, $\bigvee D\in U$ implies $D\cap U\not=\emptyset$. The set of all Scott open sets of $P$ forms the \emph{Scott topology} on $P$, which is denoted by $\sigma(P)$. The set of all upper sets of $P$ is the \emph{Alexandroff topology} on $P$ and is denoted by $\alpha(P)$.

	For a topological space $(X,\tau)$, a subset $A$ of $P$ is called \emph{saturated} if $A=\bigcap\{U\in\tau:A\subseteq U\}$. A nonempty subset $C$ of $X$ is called $irreducible$ if for any closed sets $C_1,C_2$, $C\subseteq C_1\cup C_2$ implies $C\subseteq C_1$ or $C\subseteq C_2$. For a subset $A\subseteq X$, we will use $cl(A)$ to denote the closure of $A$. Specifically, $cl(x)=cl(\{x\})$ for any $x\in X$. Note that $A$ is irreducible if and only if $cl(A)$ is irreducible.
	
	For a $T_0$ space $(X,\tau)$, the partial order $\le_\tau$, defined by $x\le_\tau y$ if and only if $x\in cl(y)$. In the following, for a $T_0$ space $(X,\tau)$, the order always refers to the specialization order if there is no other explanation. In the following, we shall use $\le$ to denote the specialization order $\le_\tau$.

	In this paper, for a topological space $(X,\tau)$, there are some notations: 
	
	(1) $\mathcal{S}(X,\tau)$, the set of all saturated sets;
	
	(2) $\mathcal{Q}(X,\tau)$, the set of all compact saturated sets;
	
	(3) $\mathcal{D}(X,\tau)$, the set of all directed sets of $(X,\le_{\tau})$;
	
	(4) $\Gamma(X,\tau)$, the set of all closed sets;
	
	(5) $Irr_c(X,\tau)$, the set of all irreducible closed set.
	
	We will simplify the notation by omitting the topology $\tau$, when the topology is unambiguous. It means that we write $\mathcal{S}(X)$ for $\mathcal{S}(X,\tau)$, $\mathcal{Q}(X)$ for $\mathcal{Q}(X,\tau)$.
		
	A $T_0$ space $(X,\tau)$ is a \emph{d-space} if $(X,\le)$ is a dcpo and $\tau\subseteq\sigma(X,\le)$.
	
	The following result is well-known.
	
	\begin{proposition}
		Let $X$ be a d-space and $C$ be a nonempty closed set. Then $A=\downarrow max(A)$ with respect to the specialization order.
	\end{proposition}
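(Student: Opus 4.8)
The plan is to unwind the definition of a d-space and then apply Zorn's Lemma. Since $X$ is a d-space, $(X,\le)$ is a dcpo and $\tau\subseteq\sigma(X,\le)$; hence every open set is an upper set and every closed set is a lower set, and moreover every closed set is closed under those directed suprema that exist in $X$. In particular the nonempty closed set $A$ is a Scott-closed subset of the dcpo $X$: we have $A=\downarrow A$, and whenever $D\subseteq A$ is directed, $\bigvee D$ exists in $X$ and lies in $A$. The inclusion $\downarrow max(A)\subseteq A$ is then immediate, since $max(A)\subseteq A$ and $A$ is a lower set.

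For the reverse inclusion, I would fix an arbitrary $x\in A$ and produce an $m\in max(A)$ with $x\le m$. Consider the poset $P_x:=\uparrow x\cap A$ with the order inherited from $\le$; it is nonempty because $x\in P_x$. To apply Zorn's Lemma, let $C$ be a nonempty chain in $P_x$. Then $C$ is a directed subset of $A$, so $\bigvee C$ exists in $X$ and belongs to $A$; since every element of $C$ is above $x$, we also get $\bigvee C\ge x$, hence $\bigvee C\in P_x$, and it is clearly an upper bound of $C$ in $P_x$. Thus every chain in $P_x$ has an upper bound in $P_x$, and Zorn's Lemma yields a maximal element $m$ of $P_x$.

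It remains to verify that such an $m$ is maximal not merely in $P_x$ but in the whole of $A$. Suppose $z\in A$ with $z\ge m$; then $z\ge m\ge x$, so $z\in\uparrow x\cap A=P_x$, and maximality of $m$ in $P_x$ forces $z=m$. Hence $m\in max(A)$ and $x\in\downarrow max(A)$. As $x\in A$ was arbitrary, $A\subseteq\downarrow max(A)$, and together with the first inclusion this gives $A=\downarrow max(A)$.

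I do not expect a genuine obstacle: the argument is a routine application of Zorn's Lemma, and the only point requiring care is the verification that chains in $\uparrow x\cap A$ admit upper bounds inside that same set. This is precisely where both defining features of a d-space are needed simultaneously: dcpo-ness guarantees that $\bigvee C$ exists, and the condition $\tau\subseteq\sigma(X,\le)$ (equivalently, closed sets are Scott-closed) guarantees that this supremum stays in $A$.
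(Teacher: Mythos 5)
Your proof is correct: the paper states this proposition as well-known and gives no proof, and your argument is the standard one — closed sets in a d-space are Scott-closed, so for $x\in A$ the set $\uparrow x\cap A$ is nonempty and chain-complete, and Zorn's Lemma produces a maximal element of $A$ above $x$. You also correctly handle the only delicate point, namely that a maximal element of $\uparrow x\cap A$ is maximal in all of $A$.
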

	
	\begin{proposition}\rm{\cite{li-yuan-zhao-2020}}
		For a $T_0$ space $(X,\tau)$, the following conditions are equivalent:
		\II
		\I[(1)] $X$ is a d-space.
		\I[(2)] For any $D\in\mathcal{D}$ and $U\in\tau$, $\bigcap\{\uparrow d:d\in D\}\subseteq U$ implies $\uparrow d\subseteq U$.
		\I[(3)] For any filtered family $\{\uparrow F_i:F_i\subseteq_{fin}X,i\in I\}$ and any $U\in\tau$, $\bigcap\{\uparrow F_i:i\in I\}\subseteq U$ implies $\uparrow F_i\subseteq U$ for some $i\in I$.
		\III
	\end{proposition}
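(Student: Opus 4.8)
The plan is to establish the cycle $(1)\Rightarrow(3)\Rightarrow(2)\Rightarrow(1)$; the implication $(1)\Rightarrow(2)$ then also follows (indeed, directly). For a directed set $D$ write $M_D=\bigcap_{d\in D}\uparrow d$, the set of upper bounds of $D$ in the specialization order, and use freely that open sets are upper sets, closed sets are lower sets, and a set $U\in\tau$ is Scott open iff it satisfies the directed-sup condition. Throughout I read condition $(2)$ as ``$\ldots$ implies $\uparrow d\subseteq U$ for some $d\in D$'' (and similarly $(3)$).

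First I would prove $(2)\Rightarrow(1)$. To see $(X,\le)$ is a dcpo, let $D$ be directed. If $M_D=\emptyset$, then $M_D\subseteq\emptyset\in\tau$, so $(2)$ yields $d\in D$ with $\uparrow d\subseteq\emptyset$, absurd since $d\in\uparrow d$; hence $M_D\neq\emptyset$. If $M_D$ had no least element, put $B=\bigcap_{a\in M_D}\downarrow a=\bigcap_{a\in M_D}cl(\{a\})$, a closed set. Each $d\in D$ satisfies $d\le a$ for all $a\in M_D$, so $D\subseteq B$, while an element of $M_D$ lying in $B$ would be a least upper bound of $D$, so $M_D\cap B=\emptyset$. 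Then $U:=X\setminus B\in\tau$ with $M_D\subseteq U$, yet $D\cap U=\emptyset$, so $\uparrow d\not\subseteq U$ for every $d\in D$ — contradicting $(2)$. Hence $\bigvee D=\min M_D$ exists, so $X$ is a dcpo. For $\tau\subseteq\sigma(X)$: any $U\in\tau$ is an upper set, and if $D$ is directed with $\bigvee D\in U$ then $M_D=\uparrow\bigvee D\subseteq U$, so $(2)$ gives $d\in D$ with $\uparrow d\subseteq U$, whence $D\cap U\neq\emptyset$; thus $U$ is Scott open.

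Next, $(3)\Rightarrow(2)$ is immediate: for directed $D$ the family $\{\uparrow d:d\in D\}$ is filtered (if $d_1,d_2\le d_3$ in $D$ then $\uparrow d_3\subseteq\uparrow d_1\cap\uparrow d_2$) and consists of finitely generated upper sets, with intersection $M_D$, so $(3)$ applies. The substantive step is $(1)\Rightarrow(3)$. Assume $X$ is a d-space and $\{\uparrow F_i:i\in I\}$ is filtered with $\bigcap_i\uparrow F_i\subseteq U$ but, for contradiction, $\uparrow F_i\not\subseteq U$ for every $i$; we may assume all $F_i$ are nonempty (otherwise the conclusion is trivial). Put $F_i'=F_i\cap(X\setminus U)$. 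Since $X\setminus U$ is a lower set and $\uparrow F_i\not\subseteq U$, each $F_i'$ is a nonempty finite set, and whenever $\uparrow F_k\subseteq\uparrow F_i\cap\uparrow F_j$ one checks $F_k'\subseteq\uparrow F_i'\cap\uparrow F_j'$, so $\{F_i':i\in I\}$ is again filtered. By Rudin's Lemma there is a directed set $D\subseteq X\setminus U$ meeting $\uparrow F_i'$ for every $i$. Any upper bound $x$ of $D$ satisfies $x\ge d_i$ for some $d_i\in D\cap\uparrow F_i'$, hence $x\in\uparrow F_i'\subseteq\uparrow F_i$ for all $i$; so $M_D\subseteq\bigcap_i\uparrow F_i\subseteq U$. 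Since $X$ is a dcpo, $\bigvee D\in M_D\subseteq U$, and since $\tau\subseteq\sigma(X)$ the set $U$ is Scott open, forcing $D\cap U\neq\emptyset$ — contradicting $D\subseteq X\setminus U$. Finally, $(1)\Rightarrow(2)$ is the same computation restricted to the singleton family, or simply $(1)\Rightarrow(3)\Rightarrow(2)$.

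I expect $(1)\Rightarrow(3)$ to be the main obstacle: it genuinely requires Rudin's Lemma to manufacture a directed set from the filtered family of finitely generated upper sets, and one must be careful that passing from $\{F_i\}$ to $\{F_i'=F_i\cap(X\setminus U)\}$ preserves filteredness so that Rudin's Lemma is applicable. A secondary subtlety is the dcpo part of $(2)\Rightarrow(1)$: the right move is to notice that the failure of $M_D$ to have a least element is exhibited precisely by the open set $X\setminus\bigcap_{a\in M_D}cl(\{a\})$, which contains $M_D$ but is disjoint from $D$.
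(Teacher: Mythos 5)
Your proof is correct. The paper itself gives no proof of this proposition (it is quoted from the cited reference), but your argument is the standard one: the cycle $(3)\Rightarrow(2)\Rightarrow(1)$ is handled exactly as in the literature, and the only substantive implication $(1)\Rightarrow(3)$ is obtained, as it must be, via Rudin's Lemma applied to the restricted family $\{F_i\cap(X\setminus U)\}$ — you correctly verify that this family stays filtered because $X\setminus U$ is a lower set, which is the one point where a careless argument would break. Your reading of $(2)$ as ``$\uparrow d\subseteq U$ for some $d\in D$'' repairs an evident typo in the statement.
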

	
	\begin{definition}\cite{shen-xi-xu-zhao-2020}
		Let $(X,\tau)$ be a $T_0$ space. 
		\II
		\I[(1)] For any $U,V\in\tau$, we define $U\ll V$ if and only if each open cover of $V$ has a fnite subfamily that covers $U$.
		\I[(2)] $(X,\tau)$ is called \emph{core-compact} if, for each $U\in\tau$, it holds that $$U=\bigcup\{V\in\tau:V\ll U\}.$$
		\I[(3)] A subfamily $\mathcal{F}\subseteq\tau$ is called a \emph{$\ll$-filtered family} if for any $U_1,U_2\in\mathcal{F}$, there exists $U_3\in\mathcal{F}$ such that $U_3\ll U_1$ and $U_3\ll U_2$.
		\I[(4)] $(X,\tau)$ is called \emph{open well-fltered} if for each $\ll$-filtered family $\mathcal{F}\subseteq \tau$ and $U\in\tau$,
		$$\bigcap\mathcal{F}\subseteq U~\Rightarrow~ V\subseteq U~for~some~V\in\mathcal{F}.$$
		\III
	\end{definition}

	\begin{definition}(\cite{Alas-2002,Goubault-2013,xu-2025,xu-yang-2025,xu-zhao-2020}) Let $(X,\tau)$ be a $T_0$ space, then
		\II
		\I[(1)] $(X,\tau)$ is a \emph{c-space} if, for each $x\in X$ and $U\in N^o(x)$, there is a $y\in U$ such that $x\in int(\uparrow y)$.
		
		\I[(2)] $(X,\tau)$ is \emph{sober} if every irreducible closed subset of $X$ is the
		closure of a unique point.
		
		\I[(3)] $(X,\tau)$ is \emph{well-filtered} if for each filtered family $\mathcal{C}\subseteq\mathcal{Q}(X)$ and $U\in\tau$, $\bigcap\mathcal{C}\subseteq U$ implies $K\subseteq U$ for some $K\in\mathcal{C}$.
		
		\I[(4)] $(X,\tau)$ is \emph{strongly well-filtered} if for each filtered family $\mathcal{K}\subseteq\mathcal{Q}(X)$, $K_0\in\mathcal{Q}(X)$ and $U\in\tau$, $K_0\cap\bigcap\mathcal{K}\subseteq U$ implies $K\in\mathcal{K}$ for some $K_0\cap K\subseteq U$.
		
		\I[(5)] $(X,\tau)$ is a \emph{strong d-space} if for each $x\in X$, $U\in\tau$ and $D\in\mathcal{D}(X)$, $\uparrow x\cap\bigcap\{\uparrow d:d\in D\}\subseteq U$ implies $\uparrow x\cap\uparrow d\subseteq U$ for some $d\in D$.
		
		\I[(6)] $(X,\tau)$ is an \emph{R-space} if for each family $\mathcal{F}=\{\uparrow F_i:F\subseteq_{fin}X,i\in I\}$ and $U\in\tau$, $\bigcap\mathcal{F}\subseteq U$ implies $\bigcap\{\uparrow{F_i}:i\in A\}\subseteq U$ for some $A\subseteq_{fin}I$.
		
		\I[(7)] $(X,\tau)$ is a \emph{strong R-space} if for each family $\mathcal{K}\subseteq\mathcal{Q}(X)$ and $U\in\tau$, $\bigcap\mathcal{K}\subseteq U$ implies that $\bigcap\mathcal{F}\subseteq U$ for some finite subfamily $\mathcal{F}\subseteq\mathcal{K}$.
		\III
		\end{definition}
		
		\begin{remark}\label{s-r}
			Let $(X,\tau)$ be a strong R-space. Then $(X,\tau)$ is an R-space and a strong well-filtered space.
		\end{remark}
		
		\begin{proposition}\rm{\cite{zhao-ho-2015}}\label{sup-sob}
			For a sup-complete poset $P$, $(P,\upsilon(P))$ is sober.
		\end{proposition}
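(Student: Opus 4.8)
The plan is to show that if $C$ is an irreducible closed subset of $(P,\upsilon(P))$, then the element $a:=\bigvee C$, which exists because $C$ is nonempty (irreducible sets are nonempty by definition) and $P$ is sup-complete, satisfies $C=\downarrow a=cl(\{a\})$, and that this point is unique. First I would record the shape of closed sets in the upper topology: the subbasic open sets are the complements $P\setminus\downarrow x$, so the basic open sets are exactly the sets $P\setminus\downarrow F$ with $F\subseteq_{fin}P$, and hence an arbitrary closed set has the form $C=\bigcap_{j\in J}\downarrow F_j$, an intersection of finite unions of principal ideals. In particular every closed set is a lower set, and $cl(\{x\})=\downarrow x$ for every $x\in P$.

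The crux is to prove $a\in C$. Suppose not. Since $C=\bigcap_{j\in J}\downarrow F_j$, there is some index $j$ with $a\notin\downarrow F_j$, while $C\subseteq\downarrow F_j=\bigcup_{x\in F_j}\downarrow x$, a finite union of closed sets which is moreover nonempty because $C\neq\emptyset$. Applying irreducibility of $C$ finitely many times (splitting the union two pieces at a time), we get $C\subseteq\downarrow x$ for some $x\in F_j$. Then $a=\bigvee C\le x$, so $a\in\downarrow x\subseteq\downarrow F_j$, contradicting the choice of $j$. Hence $a\in C$.

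It remains to finish: since $C$ is a lower set containing $a$ we have $\downarrow a\subseteq C$, and since $a=\bigvee C$ every element of $C$ lies below $a$, so $C\subseteq\downarrow a$; therefore $C=\downarrow a=cl(\{a\})$. For uniqueness, if $cl(\{a\})=cl(\{b\})$ then $\downarrow a=\downarrow b$, whence $a\le b$ and $b\le a$, so $a=b$ by antisymmetry; equivalently, one notes first that $(P,\upsilon(P))$ is $T_0$, since for $x\not\le y$ the set $P\setminus\downarrow y$ is an open neighbourhood of $x$ missing $y$.

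I expect the only genuine step to be the irreducibility argument giving $\bigvee C\in C$; the description of the closed sets of $\upsilon(P)$ and the concluding antisymmetry/$T_0$ remarks are routine bookkeeping. A minor point to be careful about is that the finite sets $F_j$ witnessing $C=\bigcap_j\downarrow F_j$ are nonempty (otherwise $\downarrow F_j=\emptyset$ forces $C=\emptyset$), which is needed to apply irreducibility to the union $\bigcup_{x\in F_j}\downarrow x$.
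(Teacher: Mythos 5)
Your proof is correct and is essentially the standard argument for this result (the paper only cites it from Zhao--Ho without giving a proof): describe the closed sets of $\upsilon(P)$ as intersections of finite unions of principal ideals, use irreducibility to force $\bigvee C$ into each such union and hence into $C$, and conclude $C=\downarrow\bigvee C=cl(\{\bigvee C\})$ with uniqueness from $T_0$. The two points you flag --- nonemptiness of $C$ (so that the sup exists in a sup-complete poset) and nonemptiness of the witnessing finite sets $F_j$ --- are exactly the details that need care, and you handle both.
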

		
		\begin{proposition}\rm{\cite{xu-yang-2025}}\label{sup-coh}
			For a sup-complete poset $P$, $(P,\upsilon(P))$ is coherent.
		\end{proposition}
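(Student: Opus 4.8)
The plan is to unfold the definition of coherence — namely that $(P,\upsilon(P))$ is sober and that the intersection of any two compact saturated subsets is again compact saturated — and to prove the only non-trivial part, since soberness is exactly Proposition \ref{sup-sob}. First I would record the elementary structure of $\upsilon(P)$: its specialization order coincides with the given order $\le$ (because $cl(\{y\})=\downarrow y$), so the saturated subsets of $(P,\upsilon(P))$ are precisely the upper sets of $P$. Next, using Alexander's subbase lemma together with the fact that the subbasic open sets are the complements $P\setminus\downarrow x$, I would record the following characterization of compactness for a saturated set $K$: $K$ is compact if and only if, for every family $\{x_i\}_{i\in I}\subseteq P$, whenever $K\cap\bigcap_{i\in J}\downarrow x_i\neq\emptyset$ for every finite $J\subseteq I$, then $K\cap\bigcap_{i\in I}\downarrow x_i\neq\emptyset$ (this is the contrapositive of "a subbasic cover admits a finite subcover").

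Now let $K_1,K_2\in\mathcal Q(P,\upsilon(P))$. Then $K_1\cap K_2$ is an upper set, hence saturated, and it remains to show it is compact. I would argue by contradiction: suppose some family $\{x_i\}_{i\in I}$ witnesses non-compactness, so $(K_1\cap K_2)\cap\bigcap_{i\in I}\downarrow x_i=\emptyset$ while $(K_1\cap K_2)\cap\bigcap_{i\in J}\downarrow x_i\neq\emptyset$ for every finite $J\subseteq I$ (the case $I=\emptyset$ is trivial, so $I\neq\emptyset$). Then a fortiori $K_1\cap\bigcap_{i\in J}\downarrow x_i\neq\emptyset$ for every finite $J$, so compactness of $K_1$ produces an element $a\in K_1$ with $a\le x_i$ for all $i$; in particular $S:=\{x_i:i\in I\}$ is bounded below. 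Since $P$ is sup-complete, the meet $m:=\bigwedge S$ exists (it is the supremum of the nonempty set of lower bounds of $S$), and $a\le m$. As $K_1$ is an upper set containing $a$, we get $m\in K_1$; running the same argument with $K_2$ gives $m\in K_2$. But $m\le x_i$ for every $i$, so $m\in(K_1\cap K_2)\cap\bigcap_{i\in I}\downarrow x_i$, contradicting emptiness of that set. Hence $K_1\cap K_2$ is compact, and combined with Proposition \ref{sup-sob} this establishes that $(P,\upsilon(P))$ is coherent.

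The one genuine obstacle, and the place where the hypothesis is actually used, is the step that upgrades "every finite subfamily of $\{x_i\}$ has a common lower bound" to "$\{x_i\}$ has a greatest lower bound in $P$": this is precisely where the compactness of $K_1$ (to manufacture a single lower bound $a$) meets the sup-completeness of $P$ (to convert the set of lower bounds into a meet). The rest — reducing to subbasic covers via Alexander's lemma, the treatment of the empty index set, and the observation that an upper set containing $a$ must contain any $m\ge a$ — is routine bookkeeping. I would also verify that the intended notion of "coherent" in this context does not demand compactness of the whole space $P$, since a sup-complete poset need not have a least element and $(P,\upsilon(P))$ may then fail to be compact.
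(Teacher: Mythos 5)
Your argument is correct and complete. Note first that the paper imports this proposition from the reference without reproducing a proof, and that its notion of coherence is only the closure of $\mathcal{Q}(X)$ under binary intersection; so the sobriety clause you fold into the definition (and discharge via Proposition \ref{sup-sob}) and your closing concern about compactness of the whole space are both moot here. The substantive part of your proof is sound: in $\upsilon(P)$ the specialization order is $\le$ and the saturated sets are exactly the upper sets; Alexander's subbase lemma reduces compactness of $K_1\cap K_2$ to the finite-intersection property against families $\{\downarrow x_i\}_{i\in I}$; and the decisive step --- using compactness of $K_1$ to produce a single common lower bound $a\in K_1$ of $\{x_i:i\in I\}$, then forming $m=\bigwedge_{i\in I}x_i$ as the supremum of the nonempty set of lower bounds (the only place sup-completeness is used), and finally observing that $m$ lies in the upper sets $K_1$ and $K_2$ and in every $\downarrow x_i$ --- is exactly right and yields the desired contradiction. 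The degenerate case $I=\emptyset$ is disposed of correctly, and the single element $m$ witnessing nonemptiness for $K_1$, $K_2$, and $K_1\cap K_2$ simultaneously is a clean way to organize the argument.
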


		\begin{definition}\cite{Gier-2003,Keimel-2009}
			Let $(X,\tau)$ be a topological space.
			\II
			\I[(1)] $\tau^s$ is called the strong topology of $\tau$, if $\tau^s$ has a subbase that $\tau\cup \Gamma(X)$. 
			\I[(2)] $\tau^p$ is called the patch topology of $\tau$, if $\tau^p$ has a subbase that $\tau\cup\{X-K:K\in \mathcal{Q}(X)\}$.
			\III 
		\end{definition}
		
		A topological $(X,\tau)$ is called a KC-space if all compact sets are closed. The strong topology is also called the Skula topology and KC-spaces called the $T_B$ spaces. In this paper, we use the notion of strong topology and KC-spaces (see \cite{skula-1969,Wil-1967}). For KC-spaces, we have the following properties. There are some propositions about KC-spaces.
		
		\begin{proposition}
			Let $(X,\tau)$ be a topological space. Consider the following conditions:
			\II
			\I[(1)] $(X,\tau)$ is a KC-space.
			\I[(2)] $\mathcal{Q}(X)\subseteq \Gamma(X)$.
			\I[(3)] $\tau=\tau^p$.
			\III
			Then $(1)\Rightarrow (2) \Leftrightarrow (3)$, and all the conditions are equivalent if $(X,\tau)$ is a $T_0$ space.
		\end{proposition}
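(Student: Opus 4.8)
The plan is to establish the chain in three pieces, only the last of which uses the $T_0$ hypothesis.

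\emph{$(1)\Rightarrow(2)$ and $(2)\Leftrightarrow(3)$.} The first implication is immediate: a compact saturated set is in particular compact, hence closed when $X$ is a KC-space, so $\mathcal{Q}(X)\subseteq\Gamma(X)$. For the equivalence with (3), recall that $\tau\subseteq\tau^p$ always holds because $\tau$ is part of the defining subbase of $\tau^p$; the remaining subbasic members are the complements $X\setminus K$ with $K\in\mathcal{Q}(X)$. Hence $\tau^p\subseteq\tau$ holds precisely when every such $X\setminus K$ is open, i.e.\ when every $K\in\mathcal{Q}(X)$ is closed, which is exactly (2). So $\tau=\tau^p$ if and only if (2), and this yields both $(2)\Rightarrow(3)$ and $(3)\Rightarrow(2)$.

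\emph{$(2)\Rightarrow(1)$ when $X$ is $T_0$.} Here I would first show that (2) forces $X$ to be $T_1$. Fix $x\in X$. The singleton $\{x\}$ is compact, its saturation is $\uparrow x$ (every open set is an upper set for the specialization order), and $\uparrow x$ is itself compact, since any open cover of $\uparrow x$ has a member $V$ with $x\in V$, whence $\uparrow x\subseteq V$ as $V$ is an upper set. Thus $\uparrow x\in\mathcal{Q}(X)$, so by (2) it is closed. Since $X$ is $T_0$, $cl(\{x\})=\downarrow x$, and $cl(\{x\})\subseteq\uparrow x$ yields $\downarrow x\subseteq\uparrow x$; by antisymmetry of $\le$ this forces $\downarrow x=\{x\}$, i.e.\ $cl(\{x\})=\{x\}$, so $X$ is $T_1$. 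Now in a $T_1$ space every subset $A$ is saturated (for $y\notin A$ the open set $X\setminus\{y\}$ contains $A$), hence every compact subset of $X$ lies in $\mathcal{Q}(X)$ and therefore, by (2), in $\Gamma(X)$. So $X$ is a KC-space, and all three conditions are equivalent.

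The only delicate point is the passage from $T_0$ to $T_1$: it rests on the facts that the saturation of a compact set is again compact and saturated, that $cl(\{x\})=\downarrow x$, and that $\le$ is antisymmetric. This step is also exactly where $T_0$ is needed: the indiscrete two-point space has $\mathcal{Q}(X)=\Gamma(X)=\{\emptyset,X\}$, so it satisfies (2) and (3), yet its singletons are compact and not closed, so it is not a KC-space.
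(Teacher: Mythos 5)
Your proof is correct and follows essentially the same route as the paper's: the implications $(1)\Rightarrow(2)\Leftrightarrow(3)$ are read off from the definitions, and for $(2)\Rightarrow(1)$ under $T_0$ you use the closedness of $\uparrow x$ together with antisymmetry of the specialization order to deduce $T_1$, whence every compact set is saturated and hence closed. The concluding counterexample (the indiscrete two-point space) showing that $T_0$ cannot be dropped is a worthwhile addition not present in the paper.
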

		\begin{proof}
			Clearly, $(1)\Rightarrow (2)\Leftrightarrow (3)$, by the definitions.
			
			 For $(2)\Rightarrow (1)$, we only need to prove that $(X,\tau)$ is a $T_1$ space; hence, all compact sets are saturated. 
			 
			 Given $x\in X$. Then $\uparrow x\in\mathcal{Q}(X)$. Assume that $y\in \uparrow x$. Then $\uparrow y\in\mathcal{Q}(X)\subseteq\Gamma(X)$ and $x\le y$ ($x\in cl(y)$) implies that $x\in \uparrow y$ since $\uparrow y$ is closed. Due to the fact that $(X,\tau)$ is $T_0$, we know that $y\in\uparrow x$ and $x\in\uparrow y$ implies that $x=y$. Thus $(X,\tau)$ is a $T_1$ space. 
		\end{proof}
			
		Recall that a topological space is called \emph{coherent} if $A,B\in\mathcal{Q}(X)$ always implies that $A\cap B\in\mathcal{Q}(X)$.
		
		\begin{remark}
			All $T_2$ spaces are KC-space. And all KC-spaces are coherent $T_1$ space.
		\end{remark}

		Let $(X,\tau)$ be a $T_0$ space and $\Gamma^*(X)$ be the set of all nonempty closed sets of $X$. For $U\in\tau$, we denoted by $\vsqu U=\{C\in\Gamma^*(X):C\cap U\not=\emptyset\}$. The Hoare power space of $(X,\tau)$ is $\Gamma^*(X)$ endowed with topology generated by $\{\vsqu U:U\in\tau\}$ and is denoted by $P_H(X)$.  Similarly, let $\mathcal{Q}^*(X)$ be the set of all nonempty compact saturated sets of $X$. For $U\in\tau$, let $\square U=\{K\in\mathcal{Q}^*(X):K\subseteq U\}$. The \emph{Smyth power space} of $(X,\tau)$ is $\mathcal{Q}^*(X)$ endowed with the topology generated by $\{\square U:U\in\tau\}$ and is denoted by $P_S(X)$. In this paper, for each family $\mathcal{F}$ of nonempty compact saturated set, we define $\downarrow_{P_S(X)}\mathcal{F}=\{Q\in\mathcal{Q}^*(X):K\subseteq Q~for~some~K\in\mathcal{F}\}$, and $\uparrow_{P_S(X)}\mathcal{F}=\{Q\in\mathcal{Q}^*(X):Q\subseteq K~for~some~K\in\mathcal{F}\}$.


		The following results regarding Smyth power spaces are well-known.
		
		\begin{theorem}\label{lyu} \rm{\cite{lyu-chen-jia-2022}}
			Let $X$ be a topological space. The following statements are equivalent:
			\II
			\I[(1)] $X$ is locally compact.
			\I[(2)] $P_S(X)$ is a c-space.
			\I[(3)] $P_S(X)$ is locally compact.
			\I[(4)] $P_S(X)$ is core-compact.
			\III
		\end{theorem}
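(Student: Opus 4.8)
The plan is to prove the cycle $(1)\Rightarrow(2)\Rightarrow(3)\Rightarrow(4)\Rightarrow(1)$, after recording two facts about $P_S(X)$. First, the specialization order of $P_S(X)$ is reverse inclusion: $K_1\le K_2$ iff $K_2\subseteq K_1$, so for $\mathcal{K}\in\mathcal{Q}^*(X)$ the set $\uparrow_{P_S(X)}\{\mathcal{K}\}=\{Q\in\mathcal{Q}^*(X):Q\subseteq\mathcal{K}\}$ is exactly the up-set of the point $\mathcal{K}$. Second, $\{\square U:U\in\tau\}$ is a base of $P_S(X)$ closed under finite intersection, $\square U\cap\square V=\square(U\cap V)$, and $\square\big(\bigcup_{i\in I}U_i\big)=\bigcup_{i\in I}\square U_i$ for every directed family $\{U_i\}_{i\in I}\subseteq\tau$ (here compactness of the members of $\mathcal{Q}^*(X)$ is used). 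The implications $(2)\Rightarrow(3)$ and $(3)\Rightarrow(4)$ hold for arbitrary topological spaces: in any c-space a point $p$ of an open set $O$ admits $q\in O$ with $p\in int(\uparrow q)\subseteq\uparrow q\subseteq O$, and $\uparrow q$ is (super)compact saturated, so the space is locally compact; and any locally compact space is core-compact, since $int(Q)\ll O$ whenever $Q\in\mathcal{Q}(X)$ satisfies $int(Q)\subseteq Q\subseteq O$. So the real work is $(1)\Rightarrow(2)$ and $(4)\Rightarrow(1)$.

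For $(1)\Rightarrow(2)$: let $K\in\mathcal{Q}^*(X)$ and let $\square U$ be a basic open neighbourhood of $K$, so $K\subseteq U$. By local compactness pick, for each $x\in K$, a $Q_x\in\mathcal{Q}^*(X)$ with $x\in int(Q_x)\subseteq Q_x\subseteq U$; by compactness of $K$ finitely many $int(Q_{x_1}),\dots,int(Q_{x_n})$ cover $K$. Put $V:=\bigcup_{i}int(Q_{x_i})$ and $\mathcal{K}:=\bigcup_{i}Q_{x_i}$. A finite union of compact saturated sets is compact saturated and $\mathcal{K}\subseteq U$, so $\mathcal{K}\in\square U$. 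Moreover $\square V$ is an open neighbourhood of $K$ in $P_S(X)$ with $\square V\subseteq\{L\in\mathcal{Q}^*(X):L\subseteq\mathcal{K}\}=\uparrow_{P_S(X)}\{\mathcal{K}\}$, since $L\subseteq V\subseteq\mathcal{K}$. Hence $K\in int(\uparrow_{P_S(X)}\{\mathcal{K}\})$ with $\mathcal{K}\in\square U$, which is exactly the c-space condition for $P_S(X)$ (verified on the base $\{\square U:U\in\tau\}$, which suffices).

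For $(4)\Rightarrow(1)$: assume $P_S(X)$ is core-compact and let $x\in U\in\tau$. Then $\uparrow x\in\square U$, so core-compactness yields an open $\mathcal{W}$ of $P_S(X)$ with $\uparrow x\in\mathcal{W}\ll\square U$; shrinking to a subbasic member, there is $W\in\tau$ with $x\in W$ and $\square W\ll\square U$. Everything then reduces to the claim: \emph{if $\square W\ll\square U$ in the open-set lattice of $P_S(X)$, then there is $Q\in\mathcal{Q}^*(X)$ with $W\subseteq Q\subseteq U$}. Granting it, $W$ is open and $W\subseteq Q$, so $x\in W\subseteq int(Q)\subseteq Q\subseteq U$ and $X$ is locally compact. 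To prove the claim one tests $\square W\ll\square U$ against a suitable directed open cover of $\square U$; using the base properties above, covers of the form $\{\square(\bigcup_{i\in F}O_i):F\subseteq_{fin}I\}$ arising from an open cover $\{O_i\}$ of a candidate compact saturated set are the natural test family, while one must simultaneously keep control of the compact saturated subsets of $U$ lying outside the candidate.

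The main obstacle is exactly this claim. The naive candidate $Q=\bigcup\{K:K\in\square W\}$ is simply $W$ (each $x\in W$ gives $\uparrow x\in\square W$) and need not be compact — e.g. for $X=\mathbb{R}$ and $W=(0,1)$ one still has $\square W\ll\square\mathbb{R}$ although $(0,1)$ is not compact — so $Q$ must be obtained indirectly, morally as a ``relatively compact hull'' of $W$ inside $U$. I expect this to run through a Rudin-type maximality argument inside $P_S(X)$, applied to the family of finite unions of the closed sets $cl_{P_S(X)}(\uparrow_{P_S(X)}\{K\})$, exploiting that the open-set lattice of $P_S(X)$ is a continuous lattice generated under finite meets and directed joins by $\{\square U:U\in\tau\}$; once one such $Q$ is in hand the proof is complete. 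As a sanity check on the statement, $U\mapsto\square U$ is a Scott-continuous section of the frame homomorphism induced by $x\mapsto\uparrow x$, so $\mathcal{O}(X)$ is a Scott-continuous retract of the continuous lattice $\mathcal{O}(P_S(X))$ and $X$ is already core-compact; the genuine content of $(4)\Rightarrow(1)$ is the upgrade from core-compact to locally compact, which fails for general spaces but holds here thanks to the special shape of the open sets of $P_S(X)$.
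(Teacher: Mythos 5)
The paper states this result as a citation from \cite{lyu-chen-jia-2022} and gives no proof of its own, so your proposal can only be judged on its own merits. Three of your four implications are fine: the order-theoretic facts about $P_S(X)$ (specialization order is reverse inclusion, $\square U\cap\square V=\square(U\cap V)$, $\square$ of a directed union is the union of the $\square$'s), the general implications c-space $\Rightarrow$ locally compact $\Rightarrow$ core-compact, and your argument for $(1)\Rightarrow(2)$ (covering $K$ by finitely many $int(Q_{x_i})$, taking $\mathcal{K}=\bigcup_i Q_{x_i}$ and $V=\bigcup_i int(Q_{x_i})$, and checking $\square V\subseteq\uparrow_{P_S(X)}\{\mathcal{K}\}$) are all correct, including the remark that the c-space condition may be verified on the base $\{\square U:U\in\tau\}$.

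The problem is $(4)\Rightarrow(1)$, which is the only substantive direction and which you do not prove. You correctly reduce it to the claim that $\square W\ll\square U$ in $\mathcal{O}(P_S(X))$ forces the existence of $Q\in\mathcal{Q}^*(X)$ with $W\subseteq Q\subseteq U$, and you correctly observe that the naive candidate $\bigcup\square W=W$ fails; but what follows is only an expectation (``I expect this to run through a Rudin-type maximality argument\dots''), with no actual construction of $Q$ and no specification of which cover of $\square U$ is being tested or how the finite subcover produces a compact set. As it stands the cycle is broken: with only $(1)\Rightarrow(2)\Rightarrow(3)\Rightarrow(4)$ established, none of the converse implications follow, so the theorem is not proved. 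Note also that testing $\square W\ll\square U$ against covers by basic opens only yields $W\ll U$ in $\mathcal{O}(X)$ (since $\uparrow x\in\square W$ for each $x\in W$), i.e.\ core-compactness of $X$, which you yourself point out is strictly weaker than what is needed; so the missing step is genuinely nontrivial. A workable route, which you may want to compare with \cite{lyu-chen-jia-2022}, starts from the lemma that the union of a compact subset $\mathcal{K}$ of $P_S(X)$ is a compact subset of $X$ (cover $\bigcup\mathcal{K}$ by opens $W_j$, observe that $\{\square(\bigcup_{j\in F}W_j):F~finite\}$ is a directed open cover of $\mathcal{K}$, and use compactness of $\mathcal{K}$); this gives $(3)\Rightarrow(1)$ immediately, and the passage from core-compactness of $P_S(X)$ to a suitable compact subset of $P_S(X)$ is then the remaining, and genuinely hardest, piece.
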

		
		\begin{theorem}\label{xu}\rm{\cite{xu-shen-zhao-2020}}
			For a $T_0$ space $X$, the following conditions are equivalent:
			\II
			\I[(1)] $X$ is well-filtered.
			\I[(2)] $P_S(X)$ is a d-space.
			\I[(3)] $P_S(X)$ is well-filtered.
			\III
		\end{theorem}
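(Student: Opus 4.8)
The plan is to prove $(1)\Leftrightarrow(2)$ by a direct analysis of the Smyth power space, to obtain $(3)\Rightarrow(1)$ and $(3)\Rightarrow(2)$ by short arguments, and then to attack the remaining implication $(1)\Rightarrow(3)$, which carries the real weight. The starting point is the structure of $P_S(X)$: it is a $T_0$ space, its specialization order is reverse inclusion ($K_1\le K_2\iff K_1\supseteq K_2$), and its directed subsets are exactly the families of nonempty compact saturated subsets of $X$ that are filtered under $\subseteq$. I would record two elementary facts used throughout: $\square U\cap\square V=\square(U\cap V)$ for opens $U,V$ of $X$, so that $\{\square U:U\in\tau\}$ is in fact a base for $P_S(X)$; and, for $K\in\mathcal{Q}(X)$, the set $\square K:=\{Q\in\mathcal{Q}^*(X):Q\subseteq K\}$ is compact saturated in $P_S(X)$ (it is $\subseteq$-downward closed, hence saturated; and in any basic cover $\{\square U_j\}$ of $\square K$ one has $K\subseteq U_{j_0}$, hence $\square K\subseteq\square U_{j_0}$, for some $j_0$). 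The key lemma is: \emph{if $X$ is well-filtered, then the intersection of any filtered family $\mathcal{K}$ of nonempty compact saturated subsets of $X$ is again nonempty compact saturated, and it equals $\bigvee\mathcal{K}$ in $(P_S(X),\le)$}. Here nonemptiness comes from well-filteredness applied with $U=\emptyset$; compactness comes from well-filteredness applied with $U$ the union of a given open cover of $\bigcap\mathcal{K}$, which yields some $K\in\mathcal{K}$ with $K\subseteq U$, so that $K$ — and hence $\bigcap\mathcal{K}\subseteq K$ — is covered by finitely many cover members; and the identification with $\bigvee\mathcal{K}$ is immediate because the order is reverse inclusion.

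Given the lemma, for $(1)\Rightarrow(2)$: well-filteredness makes $(P_S(X),\le)$ a dcpo, and each basic open $\square U$ is Scott open — it is an upper set, and if $\bigvee\mathcal{K}=\bigcap\mathcal{K}\subseteq U$ then well-filteredness puts some $K\in\mathcal{K}$ into $\square U$ — so, as $\{\square U\}$ is a base, every open of $P_S(X)$ is a union of Scott-open sets and hence Scott open; thus $P_S(X)$ is a d-space. For $(2)\Rightarrow(1)$: a filtered family $\mathcal{K}$ of nonempty compact saturated subsets of $X$ is directed in $P_S(X)$, so $Q_0:=\bigvee\mathcal{K}$ exists in the d-space $P_S(X)$ with $Q_0\subseteq\bigcap\mathcal{K}$ (in particular $\bigcap\mathcal{K}\neq\emptyset$); if $U$ is open with $\bigcap\mathcal{K}\subseteq U$, then $\square U$ is open in the d-space $P_S(X)$, hence Scott open, and $Q_0\in\square U$, so $\square U$ meets $\mathcal{K}$, giving some $K\in\mathcal{K}$ with $K\subseteq U$ (the case of an empty member being trivial); so $X$ is well-filtered. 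Next, $(3)\Rightarrow(2)$ is the general fact that a well-filtered $T_0$ space is a d-space — for a directed set $D$, the family $\{\uparrow d:d\in D\}$ is a filtered family of compact saturated sets, and well-filteredness gives precisely condition (2) of the d-space characterization of \cite{li-yuan-zhao-2020} recalled above — applied to the $T_0$ space $P_S(X)$. Finally, for $(3)\Rightarrow(1)$: given a filtered family $\{K_i\}$ of nonempty compact saturated subsets of $X$, the family $\{\square K_i\}$ is filtered in $\mathcal{Q}(P_S(X))$ and $\bigcap_i\square K_i=\square(\bigcap_i K_i)$; if $\bigcap_i K_i\subseteq U$ for an open $U$, then $\bigcap_i\square K_i\subseteq\square U$, so well-filteredness of $P_S(X)$ gives $\square K_i\subseteq\square U$ for some $i$, whence $K_i\in\square K_i$ forces $K_i\subseteq U$; and since the $\square K_i$ are nonempty, well-filteredness of $P_S(X)$ also forces $\bigcap_i\square K_i\neq\emptyset$, hence $\bigcap_i K_i\neq\emptyset$. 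So $X$ is well-filtered.

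It remains to prove $(1)\Rightarrow(3)$, and this is where I expect the main obstacle. The strategy is to descend from $P_S(X)$ to $X$. Given a filtered family $\{\mathbf{Q}_i\}$ in $\mathcal{Q}(P_S(X))$ and an open $\mathcal{U}$ of $P_S(X)$ with $\bigcap_i\mathbf{Q}_i\subseteq\mathcal{U}$, one first checks that $\bigcup\mathbf{Q}_i\in\mathcal{Q}(X)$ for each $i$ (an open cover of $\bigcup\mathbf{Q}_i$ in $X$ induces, via finite subunions, a cover of the compact set $\mathbf{Q}_i$ by basic open sets) and that $\{\bigcup\mathbf{Q}_i\}$ is filtered, so by the lemma $M:=\bigcap_i\bigcup\mathbf{Q}_i$ is a nonempty compact saturated subset of $X$. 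Since each $\mathbf{Q}_i$ is $\subseteq$-downward closed, $\uparrow x\in\bigcap_i\mathbf{Q}_i\subseteq\mathcal{U}$ for every $x\in M$; writing $\mathcal{U}=\bigcup_s\square V_s$, this forces $M\subseteq\bigcup_s V_s$, so by compactness $M\subseteq V_1\cup\cdots\cup V_n$ with each $\square V_k\subseteq\mathcal{U}$, and then well-filteredness of $X$ yields $\bigcup\mathbf{Q}_{i_0}\subseteq V_1\cup\cdots\cup V_n$ for some $i_0$. The remaining — and hardest — step is to conclude $\mathbf{Q}_{i_0}\subseteq\mathcal{U}$: a member $Q$ of $\mathbf{Q}_{i_0}$ need not be contained in any single $V_k$, and disposing of such ``straddling'' members is the crux. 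I would try to handle it by first establishing a structural description of the compact saturated subsets of $P_S(X)$ (for instance, that each such set is a finite union $\square K_1\cup\cdots\cup\square K_m$ with $K_1,\dots,K_m\in\mathcal{Q}^*(X)$) and then choosing the finite subfamily $V_1,\dots,V_n$ so as to be compatible with such a decomposition of $\mathbf{Q}_{i_0}$; I expect this step to be the technical heart of the theorem.
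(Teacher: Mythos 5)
This theorem is quoted in the paper from \cite{xu-shen-zhao-2020} without proof, so your attempt can only be judged on its own terms. The parts you actually complete are correct: the key lemma on filtered intersections, the verification that $\{\square U:U\in\tau\}$ is a base and that each $\square K$ is compact saturated in $P_S(X)$, the equivalence $(1)\Leftrightarrow(2)$, and the implications $(3)\Rightarrow(2)$ and $(3)\Rightarrow(1)$ are all sound.

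The genuine gap is exactly where you locate it: $(1)\Rightarrow(3)$ is not proved, and this implication is the entire substance of the theorem (the rest is routine). Worse, the route you propose for closing it cannot work, because the structural claim it rests on --- that every compact saturated subset of $P_S(X)$ is a finite union $\square K_1\cup\cdots\cup\square K_m$ --- is false. For a counterexample take $X=[0,1]$ with the usual topology (Hausdorff, hence well-filtered); the map $t\mapsto[t,t+1/2]$ from $[0,1/2]$ to $P_S(X)$ is continuous, so its image is compact, and the saturation of that image, $\mathbf{Q}=\{K\in\mathcal{Q}^*(X):K\subseteq[t,t+1/2]\ \text{for some}\ t\in[0,1/2]\}$, is compact saturated in $P_S(X)$. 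If $\mathbf{Q}=\bigcup_{i=1}^m\square K_i$ then each $K_i\in\mathbf{Q}$ forces $K_i\subseteq[t_i,t_i+1/2]$, and then $[t,t+1/2]$ for $t\notin\{t_1,\dots,t_m\}$ lies in $\mathbf{Q}$ but in no $\square K_i$. So no finite-union decomposition is available, and the ``straddling members'' problem you identify is not removable this way. The known proofs of $(1)\Rightarrow(3)$ proceed quite differently: one applies the topological Rudin lemma inside $P_S(X)$ to the closed set $P_S(X)\setminus\mathcal{U}$ and the filtered family $\{\mathbf{Q}_i\}$, obtains a minimal (hence irreducible) closed subset $\mathcal{A}$ of $P_S(X)$ meeting every $\mathbf{Q}_i$, and then uses well-filteredness of $X$ to show that such an irreducible family of compact saturated sets has a nonempty compact intersection contained in some member of the base of $\mathcal{U}$, contradicting $\mathcal{A}\cap\mathcal{U}=\emptyset$; see \cite{xu-shen-zhao-2020} and \cite{xu-zhao-2020}. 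Without an argument of this kind (or an equivalent substitute), the proof of the theorem is incomplete.
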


\section{KC-spaces and strong R-spaces}

In this section, we discuss the relationship between KC-spaces and strong R-spaces and investigate some properties of strong R-spaces and some compactness properties related to the patch topology. The relationship between KC-spaces and strong R-spaces is established as follows.

\begin{proposition}\label{KC-s-d}
	If $(X,\tau)$ is a KC-space, then $(X,\tau)$ is a strong R-space. 
\end{proposition}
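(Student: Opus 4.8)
The plan is to translate the defining condition of a strong R-space into a statement about a family of closed sets contained in one fixed compact set, and then finish using the finite-intersection-property characterization of compactness.

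Concretely, I would begin with an arbitrary family $\mathcal{K}\subseteq\mathcal{Q}(X)$ and an open set $U\in\tau$ satisfying $\bigcap\mathcal{K}\subseteq U$, and dispose of the vacuous case $\mathcal{K}=\emptyset$ immediately by taking $\mathcal{F}=\emptyset$. Otherwise I fix some $K_0\in\mathcal{K}$. The only feature of the KC hypothesis that is needed is that every compact subset of $X$ is closed: this makes each $K\in\mathcal{K}$ a closed set, and of course $X\setminus U$ is closed. Hence, for every $K\in\mathcal{K}$ the set $A_K:=K_0\cap K\cap(X\setminus U)$ is a closed subset of the compact subspace $K_0$, and since $K_0\in\mathcal{K}$ one has $\bigcap_{K\in\mathcal{K}}A_K=\big(\bigcap\mathcal{K}\big)\cap(X\setminus U)=\emptyset$ by the assumption $\bigcap\mathcal{K}\subseteq U$.

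Next I would invoke compactness of $K_0$ in its finite-intersection form: a family of closed subsets of a compact space with empty total intersection must already have empty intersection over some finite subfamily. This yields a finite $\mathcal{F}_0\subseteq\mathcal{K}$ with $\bigcap_{K\in\mathcal{F}_0}A_K=\emptyset$; replacing $\mathcal{F}_0$ by the still-finite subfamily $\mathcal{F}:=\mathcal{F}_0\cup\{K_0\}$ lets the $K_0$-factors be absorbed, giving $\big(\bigcap_{K\in\mathcal{F}}K\big)\cap(X\setminus U)=\bigcap_{K\in\mathcal{F}}A_K=\emptyset$, i.e. $\bigcap\mathcal{F}\subseteq U$. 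This is precisely the conclusion required by the definition of strong R-space.

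I do not expect a genuine obstacle; the argument is in essence a rephrasing of compactness. The two points deserving a little care are the empty-family edge case and the need to re-insert $K_0$ into the finite subfamily at the end so that the auxiliary intersections with $K_0$ can be cancelled. It is also worth recording explicitly — though standard — that a closed subset of the compact set $K_0$ is compact, which is what legitimizes passing to the subspace $K_0$; beyond ``compact $\Rightarrow$ closed,'' no further structure of KC-spaces (such as $T_1$-ness or coherence) enters.
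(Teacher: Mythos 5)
Your proposal is correct and is essentially the paper's own argument: the paper also fixes $K_0\in\mathcal{K}$, uses the KC hypothesis to make each $K$ and each intersection with $X\setminus U$ closed, and applies compactness of $K_0$ via the finite-intersection property. The only difference is presentational — the paper argues contrapositively (a filtered family of nonempty closed subsets of the compact set $(X\setminus U)\cap K_0$ has nonempty intersection), while you run the same compactness argument in its direct form.
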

\begin{proof}
	Assume $U\in\tau$ and $\mathcal{K}\subseteq\mathcal{Q}(X)$ such that $\bigcap\mathcal{F}\not\subseteq U$ for each $\mathcal{F}\subseteq_{fin}\mathcal{K}$.

	Fix $K_0\in\mathcal{K}$. Then $C=(X-U)\cap K_0$ is compact. By assumption, $C$ is a nonempty closed set. For each $K\in\mathcal{K}$, $C\cap K$ is also compact and it is therefore closed. Furthermore, $\mathcal{A}=\{\bigcap\mathcal{F}\cap C:\mathcal{F}\subseteq_{fin}\mathcal{K}\}$ is a filtered family of nonempty closed sets of $(C,\tau|_C)$. It follows that $\bigcap\mathcal{A}\not=\emptyset$. That is $\bigcap\mathcal{K}\not\subseteq U$. Thus $(X,\tau)$ is a strong R-space.
\end{proof}

While all KC-spaces are strong R-spaces, the following example shows that a KC-space need not be sober. Conversely, the Sierpi\'{n}ski space is sober but not a KC-space.

\begin{example}
	Let $\mathbb{R}$ be the set of real numbers with the co-countable topology $\tau$. Then $(\mathbb{R},\tau)$ is a KC-space, but not sober.

	Obviously, $\mathcal{Q}(\mathbb{R})=\mathcal{Q}_f(\mathbb{R})=\{F:F\subseteq_{fin}\mathbb{R}\}$. It follows that $(\mathbb{R},\tau)$ is a KC-space.
	
	Given an infinite subset $A$ of $\mathbb{R}$. Choose a countable subset $A_0=\{a_n:n\subseteq\mathbb{N}\}$ of $A$. Then $$A\subseteq \mathbb{R}=\bigcup\{(X-A_0)\cup\{a_n\}:n\in\mathbb{N}\}.$$
	Since $(X-A_0)\cup\{a_n\}$ is open for each $n\in\mathbb{N}$, $A$ is not compact. Hence the equation holds.
	
	For any nonempty sets $U_1,U_2\in\tau$, we have $U_1\cap U_2\not=\emptyset$. So $\mathbb{R}$ is irreducible. But $cl(\mathbb{R})=\mathbb{R}\not=cl(\{x\})$ for any $x\in\mathbb{R}$. Thus $(\mathbb{R},\tau)$ is not sober.
\end{example}

\begin{proposition}
	Let $(X,\tau)$ be a $T_1$ coherent well-filtered space. If $(X,\tau)$ is locally compact, then $(X,\tau)$ is a KC-space.
\end{proposition}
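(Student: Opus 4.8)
The plan is to prove the equivalent statement that every compact subset $K$ of $X$ is closed, i.e. $cl(K)\subseteq K$. Since $(X,\tau)$ is $T_1$, the specialization order is trivial, so \emph{every} subset of $X$ is saturated; in particular each compact set lies in $\mathcal{Q}(X)$, and coherence then says that the intersection of two compact sets is again compact, while local compactness says that whenever $x\in W$ with $W$ open there is a compact $Q$ with $x\in int(Q)\subseteq Q\subseteq W$. I would argue by contradiction: suppose there is a point $x\in cl(K)\setminus K$.

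The heart of the argument is to manufacture, out of local compactness at $x$, a filtered family of \emph{nonempty} compact saturated sets whose intersection is empty, so that well-filteredness is violated. Put $\mathcal{C}=\{\,K\cap Q : Q\text{ is a compact set with }x\in int(Q)\,\}$. Local compactness (applied with $W=X$) gives $\mathcal{C}\neq\emptyset$; coherence makes each $K\cap Q$ compact, hence a member of $\mathcal{Q}(X)$; and if $Q_1,Q_2$ are such sets, then $Q_1\cap Q_2$ is compact (coherence) with $x\in int(Q_1)\cap int(Q_2)\subseteq int(Q_1\cap Q_2)$, so $(K\cap Q_1)\cap(K\cap Q_2)=K\cap(Q_1\cap Q_2)\in\mathcal{C}$. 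Thus $\mathcal{C}$ is a filtered subfamily of $\mathcal{Q}(X)$. Each member is nonempty: since $x\in cl(K)$, the open neighborhood $int(Q)$ of $x$ meets $K$, so $K\cap Q\supseteq K\cap int(Q)\neq\emptyset$.

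Next I would check $\bigcap\mathcal{C}=\emptyset$: if $z\in\bigcap\mathcal{C}$ then $z\in K$, hence $z\neq x$; as $X$ is $T_1$, $X\setminus\{z\}$ is an open neighborhood of $x$, so local compactness yields a compact $Q$ with $x\in int(Q)\subseteq Q\subseteq X\setminus\{z\}$, whence $K\cap Q\in\mathcal{C}$ but $z\notin Q\supseteq K\cap Q$, a contradiction. So $\bigcap\mathcal{C}=\emptyset$. Now apply the definition of well-filteredness to the filtered family $\mathcal{C}\subseteq\mathcal{Q}(X)$ and the open set $U=\emptyset$: from $\bigcap\mathcal{C}\subseteq\emptyset$ we obtain some $K\cap Q\in\mathcal{C}$ with $K\cap Q\subseteq\emptyset$, i.e. $K\cap Q=\emptyset$, contradicting nonemptiness of the members of $\mathcal{C}$. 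Hence no such $x$ exists, $K$ is closed, and $(X,\tau)$ is a KC-space.

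The one genuinely non-obvious point — and the step I would flag as the main obstacle — is the choice of family. The naive move is to work with compact neighborhoods of the points of $K$, but such a family shrinks only to $K$ and never engages well-filteredness in a useful way. Centering the construction at the offending point $x$, so that the family shrinks to $K\cap\{x\}=\emptyset$, is exactly what makes well-filteredness bite; coherence is used precisely (and only) to keep $K\cap Q$ and $Q_1\cap Q_2$ compact, so all four hypotheses — $T_1$, coherent, well-filtered, locally compact — genuinely enter the proof.
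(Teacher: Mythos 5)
Your proof is correct and follows essentially the same route as the paper's: both arguments take a point $x\in cl(K)\setminus K$, use local compactness to build a filtered family of compact neighborhoods of $x$, use coherence to intersect them with $K$ into a filtered family of nonempty compact saturated sets with empty intersection, and then contradict well-filteredness with $U=\emptyset$. The only cosmetic difference is that the paper indexes the compact sets by the open neighborhoods of $x$ while you take all compact sets $Q$ with $x\in int(Q)$; your write-up is in fact slightly more careful in verifying filteredness and the emptiness of the intersection.
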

\begin{proof}
	Given $K\in\mathcal{Q}(X)$ and $y\not\in K$. We only need to prove that $y\not\in cl(K)$. Suppose not, then $U\cap K\not=\emptyset$ for each $U\in N^o(y)$. Indeed, $U\cap K$ is a infinite set (otherwise, $U-(U\cap K)\in N^o(y)$, a contradiction). By local compactness, we obtain that there exists $Q_U\in\mathcal{Q}(X)$ such that $y\in \int(Q_U)\subseteq Q\subseteq U$ for each $U\in N^o(y)$. It implies that $K\cap Q_U$ is infinite and compact by coherence of $X$. Furthermore, $$\bigcap (K\cap Q_U)\subseteq K\cap\{y\}=\emptyset.$$
	Since $X$ is well-filtered, we have $K\cap Q_U=\emptyset$ for some $U\in N^o(y)$, a contradiction.
\end{proof}

 We now proceed to establish the connection among strong R-spaces, strongly well-filtered spaces and R-spaces. It is clear that all strong R-spaces are strongly well-filtered R-spaces. Conversely, we have the following proposition.
 
 Before proceeding, recall that a $T_0$ space $(X,\tau)$ is called locally hypercompact, if for each $x\in X$ and $U\in N^o(x)$, there is a finite set $F\subseteq X$ such that $x\in int(\uparrow F)$ and $F\subseteq U$. 

\begin{proposition}\label{wf-s-r} 
	Let $(X,\tau)$ be a $T_0$ space.
	\II
	\I[(i)] If $(X,\tau)$ is a coherent well-filtered space, then $(X,\tau)$ is a strong R-space.
	\I[(ii)] If $(X,\tau)$ is a locally hypercompact R-space, then $(X,\tau)$ is a strong R-space.
	\III
\end{proposition}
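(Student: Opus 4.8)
The plan is to establish (i) and (ii) separately, in each case reducing the strong R-space condition to a property the space already enjoys.

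\emph{For (i):} let $U\in\tau$ and $\mathcal{K}\subseteq\mathcal{Q}(X)$ with $\bigcap\mathcal{K}\subseteq U$; we may assume $\mathcal{K}\ne\emptyset$, since otherwise $U=X$ and the empty subfamily works. Put $\mathcal{C}=\{\bigcap\mathcal{F}:\emptyset\ne\mathcal{F}\subseteq_{fin}\mathcal{K}\}$. Applying coherence inductively, every nonempty finite intersection of compact saturated sets is again compact saturated, so $\mathcal{C}\subseteq\mathcal{Q}(X)$; $\mathcal{C}$ is filtered because $\bigcap(\mathcal{F}_1\cup\mathcal{F}_2)$ lies below both $\bigcap\mathcal{F}_1$ and $\bigcap\mathcal{F}_2$; and $\bigcap\mathcal{C}=\bigcap\mathcal{K}\subseteq U$. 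Well-filteredness then produces $\bigcap\mathcal{F}\in\mathcal{C}$ with $\bigcap\mathcal{F}\subseteq U$, i.e.\ a finite $\mathcal{F}\subseteq\mathcal{K}$ with $\bigcap\mathcal{F}\subseteq U$. There is essentially no obstacle here: the only ingredient beyond well-filteredness is closure of $\mathcal{Q}(X)$ under finite intersections, which is exactly coherence and is what keeps $\mathcal{C}$ a filtered subfamily of $\mathcal{Q}(X)$.

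\emph{For (ii):} the key step will be an approximation lemma — in a locally hypercompact $T_0$ space, every $K\in\mathcal{Q}(X)$ satisfies $K=\bigcap\{\uparrow F:F\subseteq_{fin}X,\ K\subseteq int(\uparrow F)\}$. The inclusion ``$\subseteq$'' is clear. For ``$\supseteq$'', take $y\notin K$; since $K$ is saturated there is $V\in\tau$ with $K\subseteq V$ and $y\notin V$. For each $x\in K$ (so $V\in N^o(x)$) local hypercompactness gives a finite $F_x\subseteq V$ with $x\in int(\uparrow F_x)$, and by compactness of $K$ some $x_1,\dots,x_n$ satisfy $K\subseteq\bigcup_{i=1}^n int(\uparrow F_{x_i})$. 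Then $F:=F_{x_1}\cup\dots\cup F_{x_n}$ is finite, $\uparrow F\subseteq V$ since $F\subseteq V$ and open sets are upper sets, and $K\subseteq int(\uparrow F)$ since $\uparrow F_{x_i}\subseteq\uparrow F$; hence $y\notin\uparrow F$ and $y$ is excluded from the intersection.

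Granting the lemma, I would take $U\in\tau$ and $\mathcal{K}\subseteq\mathcal{Q}(X)$ with $\bigcap\mathcal{K}\subseteq U$, and set $I=\{F\subseteq_{fin}X:K\subseteq int(\uparrow F)\text{ for some }K\in\mathcal{K}\}$. Each $\uparrow F$ with $F\in I$ contains a member of $\mathcal{K}$, and by the lemma each $K\in\mathcal{K}$ is the intersection of the $\uparrow F$ that witness it, so $\bigcap\{\uparrow F:F\in I\}=\bigcap\mathcal{K}\subseteq U$; moreover $\{\uparrow F:F\in I\}$ is a family of the kind admitted in the R-space condition. Hence some finite $A\subseteq I$ satisfies $\bigcap\{\uparrow F:F\in A\}\subseteq U$, and choosing for each $F\in A$ some $K_F\in\mathcal{K}$ with $K_F\subseteq int(\uparrow F)\subseteq\uparrow F$ yields the finite subfamily $\{K_F:F\in A\}\subseteq\mathcal{K}$ with $\bigcap\{K_F:F\in A\}\subseteq U$, as required. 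The one delicate point is the approximation lemma itself, where the pointwise neighbourhood data of local hypercompactness must be amalgamated, via compactness of $K$, into a single finitely generated $\uparrow F$ trapped between $K$ and $V$; everything after that is bookkeeping translating the R-space output back into a finite subfamily of $\mathcal{K}$.
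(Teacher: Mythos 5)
Your proposal is correct and follows essentially the same route as the paper: part (i) turns $\{\bigcap\mathcal{F}:\emptyset\neq\mathcal{F}\subseteq_{fin}\mathcal{K}\}$ into a filtered family of compact saturated sets via coherence and invokes well-filteredness, and part (ii) approximates each $K\in\mathcal{K}$ from above by finitely generated upper sets $\uparrow F$ with $K\subseteq int(\uparrow F)$ and then applies the R-space property to that family. Your version is in fact a bit more careful than the paper's (isolating the approximation lemma $K=\bigcap\{\uparrow F: K\subseteq int(\uparrow F)\}$ and handling the empty-family case), but the underlying argument is the same.
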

\begin{proof}
	Assume $U\in\tau$ and $\mathcal{K}\subseteq\mathcal{Q}(X)$ with $\bigcap\mathcal{K}\subseteq U$. It is sufficient to prove that $\bigcap\mathcal{F}\subseteq U$ for some $\mathcal{F}\subseteq_{fin}\mathcal{K}$.
	
	(i) By coherence of $(X,\tau)$, $\{\bigcap\mathcal{F}:\mathcal{F}\subseteq_{fin}\mathcal{K}\}$ is a filter of compact saturated sets. Since $(X,\tau)$ is well-filtered, there is an $\mathcal{F}\subseteq_{fin}\mathcal{K}$ such that $\bigcap\mathcal{F}\subseteq \mathcal{U}$.
	
	(ii) Given a compact saturated set $K\in\mathcal{Q}(X)$. Then $K=\bigcap\{V\in\tau:K\subseteq U\}$. Let $U$ be an open set with $K\subseteq V$. For each $x\in K$, there is a finite set $F_x$ such that $x\in int(\uparrow F_x)$ and $F_x\subseteq V$, because $(X,\tau)$ is locally hypercompact. Due to the fact that $K$ is compact, it exists a finite set $F_{K,V}$ satisfying $K\subseteq int(\uparrow F_{K,V})$ and $F_{K,V}\subseteq U$. It holds that
	$$\bigcap\mathcal{K}=\bigcap_{K\in\mathcal{K}}\bigcap_{K\subseteq V,V\in\tau}\uparrow F_{K,V}\subseteq U.$$
	There is a finite family $\{F_{K_i,V_i}:i=1,2,...,n\}$ satisfying $\bigcap_{i=1}^n\uparrow F_{K_i,V_i}\subseteq U$, where $K_i\in\mathcal{K},K_i\subseteq V_i\in \tau$, since $(X,\tau)$ is an R-space. It follows that $\bigcap_{i=1}^n K_i\subseteq \bigcap_{i=1}^n\uparrow F_{K_i,V_i}\subseteq U.$
\end{proof}

From Remark \ref{s-r}, every strong R-space is a strongly well-filtered R-space. However, an infinite set endowed with the co-finite topology is $T_1$(thus an R-space), but fails to be well-filtered; consequently, it is not strongly well-filtered. In what follows, we provide an example to demonstrate that not all strongly well-filtered spaces are R-spaces.

\begin{example}
	Let $P=\{n_1:n\in\mathbb{N}\}\cup\{n_2:n\in\mathbb{N}\}$. 
	
	Define an order on $P$ as follows (see Figure 1):
	
	(i) $n_1\le m_2$ for $n\le m$;
	
	(ii) $x\le x$ for all $x\in P$.
	
	\begin{figure}[h]
		\centering
		\begin{tikzpicture}[scale=0.5]
			\path (-8,0) node[left]{} coordinate (a);
			\fill (a) circle (2pt);
			\path (-4,0) node[left]{} coordinate (b);
			\fill (b) circle (2pt);
			
			\path (0,0) node[left]{} coordinate (c);
			\fill (c) circle (2pt);
			\path (4,0) node[left]{} coordinate (d);
			\fill (d) circle (2pt);
			
			\path (-8,3) node[left]{} coordinate (e);
			\fill (e) circle (2pt);
			\path (-4,3) node[left]{} coordinate (f);
			\fill (f) circle (2pt);
			
			\path (0,3) node[left]{} coordinate (g);
			\fill (g) circle (2pt);
			\path (4,3) node[left]{} coordinate (h);
			\fill (h) circle (2pt);
			
			\draw (a) -- (e);			
			\draw (a) -- (f);
			\draw (a) -- (g);
			\draw (a) -- (h);
			
			\draw (b) -- (f);
			\draw (b) -- (g);
			\draw (b) -- (h);
			
			\draw (c) -- (g);
			\draw (c) -- (h);
			
			\draw (d) -- (h);			
			\draw[style=dashed] (6,0) -- (9,0);
			\draw[style=dashed] (6,3) -- (9,3);			
			
			\path (-7.1,-1) node[left]{$1_1$} coordinate;
			\path (-3.1,-1) node[left]{$2_1$} coordinate;
			\path (0.9,-1) node[left]{$3_1$} coordinate;
			\path (4.9,-1) node[left]{$4_1$} coordinate;
			
			\path (-7.1,4) node[left]{$1_2$} coordinate;
			\path (-3.1,4) node[left]{$2_2$} coordinate;
			\path (0.9,4) node[left]{$3_2$} coordinate;
			\path (4.9,4) node[left]{$4_2$} coordinate;

		\end{tikzpicture}
		\caption{$(P,\le)$ }\label{fig1}
	\end{figure}
	Let $P$ be endowed with Scott topology. If $\mathcal{F}$ is a filter base consisting of nonempty compact saturated sets, then $|\mathcal{F}|$ is finite. It implies $(P,\sigma(P))$ is strongly well-filtered. However, $\bigcap \{\uparrow n_1:n\in\mathbb{N}\}=\emptyset$ and $\bigcap\{\uparrow n_1:n\in F\}\not=\emptyset$ for each $F\subseteq_{fin}\mathbb{N}$. Thus $(P,\sigma(P))$ is not an R-space.
\end{example}

Next, we discuss some properties of the Smyth powerspace concerning strong R-spaces.

\begin{theorem}
	Let $(X,\tau)$ be a $T_0$ space, consider the following condition:
	
	\II
	\I[(1)] $P_S(X)$ is a strong R-space.
	\I[(2)] $P_S(X)$ is an R-space.
	\I[(3)] $X$ is a strong R-space.
	\III
	
	Then $(1)\Rightarrow(2)\Rightarrow(3)$. If $(X,\tau)$ is locally compact, then $(1)\Leftrightarrow(2)$. If $(X,\tau)$ is coherent, then $(2)\Leftrightarrow(3)$. 
\end{theorem}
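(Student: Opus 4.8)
The plan is to establish the two unconditional implications $(1)\Rightarrow(2)\Rightarrow(3)$ first, and then to derive each conditional equivalence from a single extra implication ($(2)\Rightarrow(1)$ under local compactness, $(3)\Rightarrow(2)$ under coherence). Throughout I will use that $P_S(X)$ is $T_0$ with specialization order equal to reverse inclusion, so that for $K\in\mathcal{Q}^*(X)$ one has $\uparrow_{P_S(X)}\{K\}=\{Q\in\mathcal{Q}^*(X):Q\subseteq K\}$, and that each such set lies in $\mathcal{Q}(P_S(X))$: it is saturated in $P_S(X)$, and any basic open $\square V$ containing $K$ already contains all of $\uparrow_{P_S(X)}\{K\}$. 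Now $(1)\Rightarrow(2)$ is immediate from Remark \ref{s-r}. For $(2)\Rightarrow(3)$, I would take $\mathcal{K}\subseteq\mathcal{Q}(X)$ and $U\in\tau$ with $\bigcap\mathcal{K}\subseteq U$ (reducing at once to the case where every member of $\mathcal{K}$ is nonempty), apply the R-space property of $P_S(X)$ to the family $\{\uparrow_{P_S(X)}\{K\}:K\in\mathcal{K}\}$ and the open set $\square U$ — noting $\bigcap_{K\in\mathcal{K}}\uparrow_{P_S(X)}\{K\}=\{Q\in\mathcal{Q}^*(X):Q\subseteq\bigcap\mathcal{K}\}\subseteq\square U$ — to get a finite $\mathcal{F}\subseteq\mathcal{K}$ with $\{Q\in\mathcal{Q}^*(X):Q\subseteq\bigcap\mathcal{F}\}\subseteq\square U$, and finally recover $\bigcap\mathcal{F}\subseteq U$ by probing with $\uparrow x$ for an arbitrary $x\in\bigcap\mathcal{F}$: since $\uparrow x$ is automatically a nonempty compact saturated set contained in $\bigcap\mathcal{F}$, it must lie in $\square U$, whence $x\in U$. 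The decisive point is that $\uparrow x$ is compact for free, so it is a legitimate element of $P_S(X)$ against which to test.

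For the equivalence $(1)\Leftrightarrow(2)$ when $X$ is locally compact, only $(2)\Rightarrow(1)$ needs proof. By Theorem \ref{lyu}, local compactness of $X$ makes $P_S(X)$ a $c$-space, and a $c$-space is locally hypercompact (a singleton serves as the witnessing finite set). Since $P_S(X)$ is assumed to be an R-space, Proposition \ref{wf-s-r}(ii) upgrades it to a strong R-space, which is exactly $(1)$.

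For the equivalence $(2)\Leftrightarrow(3)$ when $X$ is coherent, only $(3)\Rightarrow(2)$ needs proof. Here I would argue as follows. Since $X$ is a strong R-space it is strongly well-filtered (Remark \ref{s-r}), hence well-filtered, so $P_S(X)$ is well-filtered by Theorem \ref{xu}. Next, coherence of $X$ is precisely what keeps finite intersections of sets of the form $\uparrow_{P_S(X)}F$, $F\subseteq_{fin}\mathcal{Q}^*(X)$, inside $\mathcal{Q}(P_S(X))$: writing $\uparrow_{P_S(X)}F=\bigcup_{K\in F}\uparrow_{P_S(X)}\{K\}$ and using $\uparrow_{P_S(X)}\{K\}\cap\uparrow_{P_S(X)}\{K'\}=\uparrow_{P_S(X)}\{K\cap K'\}$ with $K\cap K'$ compact saturated, such an intersection is a finite union of members of $\mathcal{Q}(P_S(X))$ (with the empty ones harmless), hence again in $\mathcal{Q}(P_S(X))$. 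Now, given a family $\{\uparrow_{P_S(X)}F_i:i\in I\}$ and an open set $\mathcal{V}$ of $P_S(X)$ with $\bigcap_i\uparrow_{P_S(X)}F_i\subseteq\mathcal{V}$, the collection $\{\bigcap_{i\in A}\uparrow_{P_S(X)}F_i:\emptyset\neq A\subseteq_{fin}I\}$ is a filtered family in $\mathcal{Q}(P_S(X))$ whose intersection is $\bigcap_i\uparrow_{P_S(X)}F_i\subseteq\mathcal{V}$, so well-filteredness of $P_S(X)$ yields a finite $A$ with $\bigcap_{i\in A}\uparrow_{P_S(X)}F_i\subseteq\mathcal{V}$, which is the R-space condition for $P_S(X)$.

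I expect the implication $(3)\Rightarrow(2)$ to be the main obstacle. A direct combinatorial attack — expressing $\bigcap_i\uparrow_{P_S(X)}F_i$ as the union, over all selectors $s$ in $\prod_i F_i$, of the sets $\{Q:Q\subseteq\bigcap_i K_i^{s(i)}\}$, and applying the strong R-space property of $X$ once per selector — does not work, because it produces one finite index set per selector while there are generally infinitely many selectors, so the union of those index sets need not be finite. The remedy is to recast the whole thing as a single filtered-intersection statement inside $\mathcal{Q}(P_S(X))$, where coherence of $X$ is exactly what guarantees the relevant finite intersections stay compact saturated and where the well-filteredness of $P_S(X)$ (itself a consequence of $X$ being a strong R-space, via Theorem \ref{xu}) does the closing step. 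A minor but genuine point to verify along the way is that $\uparrow_{P_S(X)}\{K\}$, and finite unions and intersections of such sets, are really \emph{saturated} in $P_S(X)$, not merely upper sets, so that they legitimately belong to $\mathcal{Q}(P_S(X))$.
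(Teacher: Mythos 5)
Your proof is correct and follows the paper's overall strategy: $(1)\Rightarrow(2)$ from Remark \ref{s-r}; $(2)\Rightarrow(3)$ by testing the R-space property of $P_S(X)$ against the family $\{\uparrow_{P_S(X)}\{K\}:K\in\mathcal{K}\}$ and the open set $\square U$, then recovering $\bigcap\mathcal{F}\subseteq U$ pointwise via $\uparrow x$; $(2)\Rightarrow(1)$ via Theorem \ref{lyu} and Proposition \ref{wf-s-r}(ii); and $(3)\Rightarrow(2)$ by converting the problem into a filtered family of compact saturated subsets of $P_S(X)$ (using coherence of $X$) and closing with well-filteredness of $P_S(X)$ (Theorem \ref{xu}, since a strong R-space is well-filtered). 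The one place where you genuinely diverge --- and in fact improve on the paper --- is $(3)\Rightarrow(2)$: the paper's argument only verifies the R-space condition of $P_S(X)$ for families of \emph{principal} upper sets $\uparrow_{P_S(X)}K$, i.e.\ the case where every finite generating set is a singleton, and this does not by itself give the full R-space condition, since $\uparrow_{P_S(X)}F=\bigcup_{K\in F}\uparrow_{P_S(X)}\{K\}$ and intersections of such finite unions do not reduce to the singleton case. You treat arbitrary finite $F_i\subseteq_{fin}\mathcal{Q}^*(X)$, observing that coherence makes each finite intersection $\bigcap_{i\in A}\uparrow_{P_S(X)}F_i$ a finite union of sets of the form $\uparrow_{P_S(X)}\{K\}$ with $K$ compact saturated, hence itself compact saturated in $P_S(X)$, so that well-filteredness of $P_S(X)$ applies; your remark that the naive selector-by-selector application of the strong R-space property of $X$ fails (infinitely many selectors, hence no single finite index set) is also apt. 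The supporting facts you flag --- that $\uparrow_{P_S(X)}\{K\}$ is the saturation of $\{K\}$ and is compact, and that finite unions of compact saturated sets remain compact saturated --- all check out, so your version is the one that should be recorded.
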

\begin{proof}
	$(1)\Rightarrow (2)$: It is obvious by Remark \ref{s-r}.
	
	$(2)\Rightarrow (3)$: Given a family $\mathcal{K}$ of compact saturated sets and an open set $U\in\tau$ satisfying $\bigcap\mathcal{K}\subseteq U$. Then $\bigcap\{\uparrow_{P_S(X)}K:K\in\mathcal{K}\}\subseteq\square U$. It implies that $\bigcap\{\uparrow_{P_S(X)}K:K\in\mathcal{F}\}\subseteq\square U$ for some finite subfamily $\mathcal{F}\subseteq\mathcal{K}$. That is $\bigcap\mathcal{F}\subseteq U$.
	
	Let $(X,\tau)$ be a locally compact space.
	
	$(2)\Rightarrow (1)$: By Theorem \ref{lyu}, $P_S(X)$ is a c-space. Thus, $P_S(X)$ is a locally hypercompact space. We conclude that $P_S(X)$ is a strong R-space by Proposition \ref{wf-s-r}.
	
	Let $(X,\tau)$ be a coherent space.
	
	$(3)\Rightarrow (2)$: Let $\mathcal{K}$ be a family of compact saturated sets and $\mathcal{U}$ be an open set of $P_S(X)$ with $\bigcap\{\uparrow_{P_S(X)}K:K\in\mathcal{K}\}\subseteq\mathcal{U}$. Due to coherence of $(X,\tau)$, $\bigcap\{\uparrow_{P_S(X)}K:K\in\mathcal{F}\}=\uparrow_{P_S(X)}\bigcap\mathcal{F}$, for each $\mathcal{F}\subseteq_{fin}\mathcal{K}$. Then $$\bigcap\{\uparrow_{P_S(X)}\mathcal{F}:\mathcal{F}_{fin}\mathcal{K}\}=\bigcap\{\uparrow_{P_S(X)}K:K\in\mathcal{K}\}\subseteq\mathcal{U}.$$ By Theorem \ref{xu}, there exists $\mathcal{F}_0\subseteq_{fin}\mathcal{K}$ such that $\uparrow_{P_S(X)}\bigcap\mathcal{F}\subseteq \mathcal{U}$. That is, $\bigcap\{\uparrow_{P_S(X)}K:K\in\mathcal{F}_0\}\subseteq\mathcal{U}$. 
\end{proof}

For \cite[Question 7.4.]{xu-2025}, we show that the Smyth powerspace of a locally compact coherent well-filtered space is a strong R-space; hence, it is strongly well-filtered, which partially settles the problem.

Finally, we discuss some compactness properties of the patch topology.

\begin{proposition}\rm{\cite{Goubault-2013,xi-2017}}
	Let $(X,\tau)$ be a $T_0$ space and $\tau^p$ be the patch topology of $(X,\tau)$. Then $(X,\tau^p)$ is compact if and only if $(X,\tau)$ is compact, coherent and well-filtered.
\end{proposition}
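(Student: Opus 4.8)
The plan is to prove the two implications separately. The direction ``$(X,\tau^p)$ compact $\Rightarrow$ $(X,\tau)$ compact, coherent and well-filtered'' is the routine one and rests on two soft facts: since $\tau\subseteq\tau^p$, the identity map $(X,\tau^p)\to(X,\tau)$ is continuous, so every $\tau^p$-compact set is $\tau$-compact; and a closed subset of a compact space is compact. For the converse I would invoke the Alexander subbase lemma, which is available because $\tau^p$ comes equipped with the explicit subbase $\tau\cup\{X-K:K\in\mathcal{Q}(X)\}$.

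For the forward direction: $\tau$-compactness of $X$ is immediate, since every $\tau$-open cover is a $\tau^p$-open cover. For coherence, take $K_1,K_2\in\mathcal{Q}(X)$; then $K_1$ and $K_2$ are $\tau^p$-closed (their complements are subbasic $\tau^p$-open sets), so $K_1\cap K_2$ is a $\tau^p$-closed subset of the compact space $(X,\tau^p)$, hence $\tau^p$-compact, hence $\tau$-compact; as an intersection of saturated sets is saturated, $K_1\cap K_2\in\mathcal{Q}(X)$. For well-filteredness, let $\mathcal{C}\subseteq\mathcal{Q}(X)$ be filtered and $U\in\tau$ with $\bigcap\mathcal{C}\subseteq U$; if no member of $\mathcal{C}$ were contained in $U$, then $\{K\cap(X-U):K\in\mathcal{C}\}$ would be a filtered family of nonempty $\tau^p$-closed sets (each $K$ is $\tau^p$-closed, and $X-U$ is $\tau$-closed, hence $\tau^p$-closed), so by compactness of $(X,\tau^p)$, in its finite-intersection-property form, the intersection $(\bigcap\mathcal{C})\cap(X-U)$ would be nonempty, contradicting $\bigcap\mathcal{C}\subseteq U$.

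For the converse, by Alexander's lemma it suffices to extract a finite subcover from an arbitrary cover of $X$ by subbasic $\tau^p$-open sets, say $X=\bigcup_{i\in I}U_i\cup\bigcup_{j\in J}(X-K_j)$ with $U_i\in\tau$ and $K_j\in\mathcal{Q}(X)$. If $J=\emptyset$, this is a $\tau$-open cover and compactness of $(X,\tau)$ finishes it. Otherwise, complementation rewrites the covering condition as $\bigcap_{j\in J}K_j\subseteq U:=\bigcup_{i\in I}U_i$. If some finite subintersection $K_{j_1}\cap\cdots\cap K_{j_n}$ is empty we are done at once, since then $(X-K_{j_1})\cup\cdots\cup(X-K_{j_n})=X$. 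Otherwise, iterated coherence makes $\mathcal{C}:=\{K_{j_1}\cap\cdots\cap K_{j_n}:n\ge 1,\ j_1,\dots,j_n\in J\}$ a filtered family of nonempty compact saturated sets with $\bigcap\mathcal{C}=\bigcap_{j\in J}K_j\subseteq U$; well-filteredness of $(X,\tau)$ then yields some $K=K_{j_1}\cap\cdots\cap K_{j_n}\in\mathcal{C}$ with $K\subseteq U$, and $\tau$-compactness of $K$ gives finitely many $U_{i_1},\dots,U_{i_m}$ with $K\subseteq U_{i_1}\cup\cdots\cup U_{i_m}$. Then $(X-K_{j_1})\cup\cdots\cup(X-K_{j_n})\cup U_{i_1}\cup\cdots\cup U_{i_m}=X$ is the required finite subcover. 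I expect the only delicate point to be exactly that the $K_j$ need not themselves form a filtered family: this is precisely where coherence is used, to pass to their finite intersections, and one must also keep track of the degenerate subcase of an empty finite subintersection. It is worth noting that compactness of $(X,\tau)$ is consumed only in the case where the subbasic cover involves no set of the form $X-K$.
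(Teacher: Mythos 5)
Your proof is correct. The paper does not prove this proposition itself but cites it from Goubault-Larrecq and Xi--Lawson, and your argument is exactly the standard one from those sources: the easy direction via $\tau\subseteq\tau^p$ and $\tau^p$-closedness of compact saturated sets and of $\tau$-closed sets, and the converse via Alexander's subbase lemma, using coherence to pass to the filtered family of finite intersections and well-filteredness plus $\tau$-compactness of the resulting $K$ to extract the finite subcover. Your handling of the degenerate cases (empty $J$, empty finite subintersection) is the right level of care.
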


By Propositions \ref{wf-s-r} and \ref{KC-s-d}, we have the following corollaries.

\begin{corollary}
	Let $(X,\tau)$ be a $T_0$ space and $\tau^p$ be the patch topology of $(X,\tau)$. Then $(X,\tau^p)$ is compact if and only if $(X,\tau)$ is a compact coherent strong R-space. In addition, for each compact KC-space $(Y,\nu)$, $(Y,\nu^p)$ is compact. 
\end{corollary}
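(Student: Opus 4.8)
The plan is to deduce the corollary from the quoted patch-compactness criterion (the proposition of Goubault-Larrecq and Xi stated just above), which says that $(X,\tau^p)$ is compact precisely when $(X,\tau)$ is compact, coherent and well-filtered. Granting that criterion, the first assertion reduces to checking that, in the presence of compactness and coherence, ``well-filtered'' may be replaced by ``strong R-space''; both implications are already available in the excerpt, so no new work on the powerspace side is needed.

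First I would record the easy implication \emph{strong R-space $\Rightarrow$ well-filtered}. If $\mathcal{C}\subseteq\mathcal{Q}(X)$ is a filtered family with $\bigcap\mathcal{C}\subseteq U$ for some $U\in\tau$, the strong R-space condition yields a finite $\mathcal{F}\subseteq\mathcal{C}$ with $\bigcap\mathcal{F}\subseteq U$, and then filteredness of $\mathcal{C}$ supplies a single member $K\in\mathcal{C}$ with $K\subseteq\bigcap\mathcal{F}\subseteq U$; hence $(X,\tau)$ is well-filtered. (This is also consistent with Remark \ref{s-r}, which already gives that a strong R-space is strongly well-filtered.) The converse direction under coherence is exactly Proposition \ref{wf-s-r}(i): a coherent well-filtered space is a strong R-space. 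Putting the two together: if $(X,\tau^p)$ is compact, then by the criterion $(X,\tau)$ is compact, coherent and well-filtered, and Proposition \ref{wf-s-r}(i) upgrades this to ``compact coherent strong R-space''; conversely, a compact coherent strong R-space is compact, coherent and well-filtered by the implication above, so the criterion returns compactness of $(X,\tau^p)$. This establishes the stated equivalence.

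For the final sentence, let $(Y,\nu)$ be a compact KC-space. Every KC-space is a coherent $T_1$ space (hence $T_0$) by the remark following the KC-characterization, and by Proposition \ref{KC-s-d} it is moreover a strong R-space; therefore $(Y,\nu)$ is a compact coherent strong R-space, and the first part of the corollary already proved gives that $(Y,\nu^p)$ is compact. (Alternatively one may argue directly: in a KC-space every compact saturated set is closed, so $\nu=\nu^p$, and the conclusion is then immediate from the assumed compactness of $(Y,\nu)$.)

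I do not expect a genuine obstacle here: every step is either a one-line unwinding of the definitions or a direct appeal to an earlier result. The only point deserving attention is the interchangeability of ``well-filtered'' and ``strong R-space'' in the scope of compactness and coherence, which is precisely the content of Remark \ref{s-r} together with Proposition \ref{wf-s-r}(i).
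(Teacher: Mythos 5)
Your proposal is correct and follows essentially the same route as the paper, which derives the corollary by combining the quoted patch-compactness criterion with Proposition \ref{wf-s-r}(i) (coherent well-filtered $\Rightarrow$ strong R-space), the converse implication via Remark \ref{s-r}, and Proposition \ref{KC-s-d} for the KC case. The details you supply (including the direct check that strong R-spaces are well-filtered, and the alternative $\nu=\nu^p$ argument for KC-spaces) are exactly the unwinding the paper leaves implicit.
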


\section{Co-Noetherian spaces}

In this section, we introduce a new class of topological spaces, it is defined by modifying well-filteredness through replacing compact saturated sets with open sets in the definition. Now we supply the following definition.

\begin{definition}
	Let $(X,\tau)$ be a topological space. $(X,\tau)$ is called a co-Noetherian space, if for each open set $V$ and each filter basis $\mathcal{U}$ of open sets with $\bigcap\mathcal{U}\subseteq V$, there is a $U\in\mathcal{U}$ with $U\subseteq V$.
\end{definition}

\begin{proposition}\label{co-noe}
	Let $(X,\tau)$ be a topological space. Then the following conditions are equivalent:
	\II
	\I[(1)] $X$ is a co-Noetherian space.
	\I[(2)] For each open set $V$ and each family $\mathcal{U}$ of open sets with $\bigcap\mathcal{U}\subseteq V$, there is a $\mathcal{U}_0\subseteq_{fin}\mathcal{U}$ with $\bigcap\mathcal{U}_0\subseteq V$.
	\I[(3)] For each open set $V$ and each filter basis $\mathcal{S}$ of saturated sets with $\bigcap\mathcal{S}\subseteq V$, there is an $S\in\mathcal{S}$ with $S\subseteq V$.
	\I[(4)] For each open set $V$ and each family $\mathcal{S}$ of saturated sets with $\bigcap\mathcal{S}\subseteq V$, there is a $\mathcal{S}_0\subseteq_{fin}\mathcal{S}$ with $\bigcap\mathcal{S}_0\subseteq V$.
	\I[(5)] $\Gamma(X)=\{cl(F):F\subseteq_{fin}X\}$.
	\III
\end{proposition}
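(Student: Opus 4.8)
The plan is to establish the equivalences by proving $(1)\Leftrightarrow(2)$, $(3)\Leftrightarrow(4)$, $(2)\Leftrightarrow(4)$ and $(2)\Leftrightarrow(5)$, which together close up into all five conditions being equivalent. The first two of these are the routine passage between the ``filter basis'' and ``arbitrary family'' phrasings. For $(2)\Rightarrow(1)$ (and likewise $(4)\Rightarrow(3)$) I would note that a filter basis is in particular a family, extract a finite subfamily $\mathcal{U}_0$ with $\bigcap\mathcal{U}_0\subseteq V$, and then use the filter-basis property (by an easy induction on $|\mathcal{U}_0|$) to find a single member of $\mathcal{U}$ contained in $\bigcap\mathcal{U}_0$. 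For $(1)\Rightarrow(2)$ (and $(3)\Rightarrow(4)$), given a family $\mathcal{U}$ with $\bigcap\mathcal{U}\subseteq V$, I would apply the hypothesis to the collection $\{\bigcap\mathcal{U}_0:\emptyset\neq\mathcal{U}_0\subseteq_{fin}\mathcal{U}\}$, which is a filter basis of open sets (resp. of saturated sets, since a finite intersection of saturated sets is again saturated) having the same total intersection as $\mathcal{U}$; the degenerate case $\mathcal{U}=\emptyset$, which forces $V=X$, is handled by the empty subfamily.

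For $(2)\Leftrightarrow(4)$, the direction $(4)\Rightarrow(2)$ is immediate since open sets are saturated. For $(2)\Rightarrow(4)$, given a family $\mathcal{S}$ of saturated sets with $\bigcap\mathcal{S}\subseteq V$, I would replace each $S\in\mathcal{S}$ by the family $\{U\in\tau:S\subseteq U\}$ of its open neighbourhoods; using $S=\bigcap\{U\in\tau:S\subseteq U\}$, the union $\mathcal{O}$ of all these families is a family of open sets with $\bigcap\mathcal{O}=\bigcap\mathcal{S}\subseteq V$. Applying $(2)$ yields $U_1,\dots,U_n\in\mathcal{O}$ with $\bigcap_{i=1}^nU_i\subseteq V$, and choosing $S_i\in\mathcal{S}$ with $S_i\subseteq U_i$ gives a finite subfamily $\{S_1,\dots,S_n\}\subseteq_{fin}\mathcal{S}$ with $\bigcap_{i=1}^nS_i\subseteq V$.

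For $(2)\Leftrightarrow(5)$ I would first rewrite $(2)$ in complementary form: with $A=X-V$ and $\mathcal{C}=\{X-U:U\in\mathcal{U}\}$, condition $(2)$ says precisely that whenever a closed set $A$ is covered by a family $\mathcal{C}$ of closed sets, it is already covered by a finite subfamily. Granting $(5)$, write a closed set $A$ as $cl(\{x_1,\dots,x_n\})$; for a closed cover $\mathcal{C}$ of $A$, pick $C_i\in\mathcal{C}$ with $x_i\in C_i$, so $\{x_1,\dots,x_n\}\subseteq\bigcup_{i=1}^nC_i$ and hence $A=cl(\{x_1,\dots,x_n\})\subseteq\bigcup_{i=1}^nC_i$. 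Conversely, given an arbitrary closed set $A$, apply the closed-cover form of $(2)$ to the cover $\{cl(x):x\in A\}$ of $A$ (note $cl(x)\subseteq A$ since $x\in A$ and $A$ is closed), obtaining $A\subseteq cl(x_1)\cup\dots\cup cl(x_n)=cl(F)$ with $F=\{x_1,\dots,x_n\}\subseteq A$; thus $A=cl(F)$, and since any $cl(F)$ with $F$ finite is closed, $\Gamma(X)=\{cl(F):F\subseteq_{fin}X\}$.

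None of these steps is deep. The only places that demand any care are the correct use of the identity ``a saturated set equals the intersection of its open neighbourhoods'' in $(2)\Rightarrow(4)$, the bookkeeping of the complement reformulation in $(2)\Leftrightarrow(5)$, and guarding the filter-basis arguments against empty or trivial families. I would therefore expect the write-up to be short, with the equivalence $(2)\Leftrightarrow(5)$ carrying whatever genuine content the proposition has.
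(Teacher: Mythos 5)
Your proposal is correct; every step goes through, and you have even flagged the empty-family edge cases that need guarding. The organization differs from the paper's: the paper dismisses $(4)\Rightarrow(3)\Rightarrow(1)$ and $(4)\Rightarrow(2)\Rightarrow(1)$ as clear and then closes the cycle with $(1)\Rightarrow(5)\Rightarrow(4)$, whereas you arrange the equivalences as spokes around $(2)$. The mathematical content is nonetheless the same in two places and genuinely different in one. Your $(2)\Rightarrow(5)$ via the closed cover $\{cl(x):x\in A\}$ is the complement-side rephrasing of the paper's $(1)\Rightarrow(5)$, which applies co-Noetherianness to the filter basis $\{X-cl(F):F\subseteq_{fin}C\}$; these are the same computation. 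Where you diverge is in passing from open families to saturated families: the paper proves $(5)\Rightarrow(4)$ directly by writing $X-U=cl(F)$, choosing for each $a\in F$ a set $S_{i_a}\in\mathcal{S}$ with $a\notin S_{i_a}$, and invoking saturation to get $cl(\{a\})\cap S_{i_a}=\emptyset$, so that the finite subfamily $\{S_{i_a}:a\in F\}$ works. You instead prove $(2)\Rightarrow(4)$ by replacing each saturated set with its filter of open neighbourhoods and pulling a finite subfamily back along the containments $S_i\subseteq U_i$. Both arguments rest on the identity $S=\bigcap\{U\in\tau:S\subseteq U\}$; yours is the more modular reduction (it never mentions $(5)$), while the paper's is more economical in that it extracts a finite subfamily of size $|F|$ tied directly to the finite generating set of the closed complement. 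Either write-up is acceptable.
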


\begin{proof}
	It is clear that $(4)\Rightarrow (3) \Rightarrow (1)$ and $(4)\Rightarrow (2) \Rightarrow (1)$. 
	
	Now we only need to prove that $(1)\Rightarrow (5) \Rightarrow (4)$.
	
	$(1)\Rightarrow (5)$: Given a closed set $C$. Then $$X-\bigcup\{cl(F):F\subseteq_{fin}C\}=\bigcap\{X-cl(F):F\subseteq_{fin}C\}=X-C.$$
	
	It is clear that $X-C\in\tau$ and $\{X_cl(F):F\subseteq_{fin}C\}$ is a basis of open sets. There is a finite set $F\subseteq_{fin}C$, such that $X-cl(F)\subseteq X-C$ (i.e., $C\subseteq cl(F)$). Thus $C=cl(F)$.
	
	$(5)\Rightarrow (4)$: Assume $U\in \tau$ and $\{S_i:i\in I\}\subseteq S(X)$ such that $\bigcap_{i\in I}S_i\subseteq U$. 
	
	That is, $X-U\subseteq \bigcup_{i\in I}(X-S_i)$. Then there is a finite set $F\subseteq_{fin}X-U$ with $X-U=cl(F)$.
	
	Fix $a\in F$. Then $a\in cl(F)\subseteq \bigcup_{i\in I}X-S_i$. It implies that there exists an $i_a\in I$ such that $a\not\in S_{i_a}$. It follows that $cl(\{a\})\cap S_{i_a}=\emptyset$. It implies that $\bigcup\{cl(\{a\}):a\in F\}\cap\bigcap \{S_{i_a}:a\in F\}=\emptyset$. That is $cl(F)\subseteq \bigcup\{X-S_{i_a}:a\in F\}$. Thus $\bigcap\{S_{i_a}:a\in F\}\subseteq U$.
\end{proof}

\begin{remark}
	Recall that a space $(X,\tau)$ is called Noetherian, if all open sets are compact. That is, for each $\mathcal{U}\subseteq\tau$ and $V\in\tau$, $V\subseteq\bigcup\mathcal{U}$ always implies that $V\subseteq\bigcup\mathcal{U}_0$ for some $\mathcal{U}_0\subseteq_{fin}\mathcal{U}$.
	
	The reason that the space is called co-Noetherian spaces is that it satisfies the following property, for each $\mathcal{E}\subseteq\Gamma(X)$ and $C\in\Gamma(X)$, $C\subseteq\bigcup\mathcal{E}$ always implies that $C\subseteq\bigcup\mathcal{E}_0$ for some $\mathcal{E}_0\subseteq_{fin}\mathcal{E}$.
\end{remark}

\begin{proposition}
	Let $(X,\tau)$ be a co-Noetherian $T_0$ space. Then $(X,\tau)$ is a strong R-space.
\end{proposition}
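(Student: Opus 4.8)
The plan is to read this off directly from Proposition \ref{co-noe}, since the defining property of a strong R-space is nothing but the special case of condition (4) of that proposition in which the family of saturated sets is required to consist of compact saturated sets.

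Concretely, I would argue as follows. Fix a family $\mathcal{K}\subseteq\mathcal{Q}(X)$ and an open set $U\in\tau$ with $\bigcap\mathcal{K}\subseteq U$; the goal is to produce a finite subfamily $\mathcal{F}\subseteq\mathcal{K}$ with $\bigcap\mathcal{F}\subseteq U$. Every compact saturated set is in particular saturated, so $\mathcal{K}$ is a family of saturated sets. By Proposition \ref{co-noe}, co-Noetherianity of $(X,\tau)$ is equivalent to condition (4) there; applying (4) to the family $\mathcal{K}$ and the open set $U$ yields a finite $\mathcal{F}\subseteq\mathcal{K}$ with $\bigcap\mathcal{F}\subseteq U$, which is exactly what is needed. (The degenerate case $\mathcal{K}=\emptyset$ is harmless: then $\bigcap\mathcal{K}=X\subseteq U$ forces $U=X$, and the empty subfamily already works.)

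There is essentially no obstacle here: the whole content is the elementary observation that $\mathcal{Q}(X)\subseteq\mathcal{S}(X)$, so the strong R-space condition quantifies over a strictly smaller collection than the co-Noetherian condition. If one prefers to avoid invoking part (4), an equally short alternative runs through characterization (5): write $X-U=cl(F)$ for some $F\subseteq_{fin}X$, then for each $a\in F$ (so $a\notin U$, hence $a\notin\bigcap\mathcal{K}$) choose $K_a\in\mathcal{K}$ with $a\notin K_a$, and use that each $K_a$ is an upper set to get $cl(\{a\})\cap K_a=\emptyset$; taking unions over $a\in F$ gives $cl(F)\cap\bigcap_{a\in F}K_a=\emptyset$, i.e.\ $\bigcap_{a\in F}K_a\subseteq U$. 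This is precisely the argument already used for $(5)\Rightarrow(4)$. Note finally that the $T_0$ assumption plays no role in the proof itself; it is present only because the notion of strong R-space was defined for $T_0$ spaces.
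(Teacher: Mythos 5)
Your proposal is correct and is exactly the paper's argument: the paper's proof is the one-line observation that $\mathcal{Q}(X)\subseteq\mathcal{S}(X)$ combined with condition (4) of Proposition \ref{co-noe}. Your expanded version, including the alternative route through characterization (5), just spells out the same reduction in more detail.
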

\begin{proof}
	It is straightforward, by $\mathcal{Q}(X)\subseteq\mathcal{S}(X)$ and Proposition \ref{co-noe}.
\end{proof}

\begin{corollary}\label{co-noe-sober}
	All $T_0$ co-Noetherian spaces are sober.
\end{corollary}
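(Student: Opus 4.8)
The plan is to reduce soberness to the finiteness characterization of closed sets provided by Proposition \ref{co-noe}(5). Uniqueness of the generic point is automatic: if $cl(x)=cl(y)$ in a $T_0$ space then $x\le y$ and $y\le x$ in the specialization order, hence $x=y$. So the only thing to establish is that every irreducible closed subset $C$ of a $T_0$ co-Noetherian space $(X,\tau)$ is of the form $cl(x)$ for some $x\in X$.

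First I would invoke Proposition \ref{co-noe}, equivalence $(1)\Leftrightarrow(5)$, which gives $\Gamma(X)=\{cl(F):F\subseteq_{fin}X\}$. Thus the irreducible closed set $C$ can be written as $C=cl(F)$ for some finite $F\subseteq X$; since $C$ is irreducible it is nonempty, so $F$ may be taken nonempty, say $F=\{x_1,\dots,x_n\}$ with $n\ge 1$. Then $C=cl(\{x_1,\dots,x_n\})=cl(x_1)\cup\cdots\cup cl(x_n)$, a finite union of closed sets.

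Next I would peel off the union using irreducibility. Writing $C\subseteq cl(x_1)\cup\bigl(cl(x_2)\cup\cdots\cup cl(x_n)\bigr)$ and applying the definition of irreducibility, either $C\subseteq cl(x_1)$ or $C\subseteq cl(x_2)\cup\cdots\cup cl(x_n)$; iterating this (a trivial induction on $n$) yields $C\subseteq cl(x_i)$ for some $i\in\{1,\dots,n\}$. On the other hand $x_i\in F\subseteq C$ and $C$ is closed, so $cl(x_i)\subseteq C$. Hence $C=cl(x_i)$, which together with the uniqueness observation shows $(X,\tau)$ is sober.

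There is no serious obstacle here: the content of the statement is entirely carried by the characterization $\Gamma(X)=\{cl(F):F\subseteq_{fin}X\}$ established in Proposition \ref{co-noe}, after which soberness follows from the elementary fact that an irreducible set contained in a finite union of closed sets is contained in one of them. The only points requiring a word of care are that irreducible sets are nonempty (so that $F$ can be chosen nonempty) and that $T_0$ is what delivers uniqueness of the point.
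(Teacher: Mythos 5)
Your proof is correct and follows essentially the same route as the paper's: write the irreducible closed set as $cl(F)=\bigcup_{x\in F}cl(\{x\})$ via Proposition \ref{co-noe}(5) and use irreducibility to collapse the finite union to a single $cl(x)$. You merely spell out the induction on the finite union and the $T_0$ uniqueness step, which the paper leaves implicit.
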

\begin{proof}
	Let $(X,\tau)$ be a co-Noetherian space. Given an irreducible closed set $C$ of $(X,\tau)$. Then $C=cl(F)$ for some $F\subseteq_{fin}C$. Since $C=cl(F)=\bigcup\{cl(\{x\}):x\in F\}$, we have $C=cl(\{x\})$ for some $x\in F$. Thus $(X,\tau)$ is sober.
\end{proof}

See \cite[Exercise III-3.21]{Gier-2003}, any dcpo that has no infinite anti-chain is quasi-continuous. There is a similar property.

\begin{corollary}
	Any dcpo that has no infinite anti-chain endowed with Scott topology is a co-Noetherian space.
\end{corollary}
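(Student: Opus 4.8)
The plan is to reduce everything to the characterisation in Proposition~\ref{co-noe}. By the equivalence $(1)\Leftrightarrow(5)$ there, it suffices to prove that, for a dcpo $P$ with no infinite anti-chain equipped with the Scott topology, every Scott-closed set is the closure of a finite subset, i.e. $\Gamma(P,\sigma(P))=\{cl(F):F\subseteq_{fin}P\}$.

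First I would record that $(P,\sigma(P))$ is a $T_0$ d-space whose specialization order coincides with the given order $\leq$, since trivially $\sigma(P)\subseteq\sigma(P,\leq)$. Hence the well-known fact recalled in Section~2 applies: for every nonempty closed set $C$ one has $C=\downarrow\max(C)$ with respect to $\leq$. Next I would observe that $\max(C)$ is an anti-chain of $P$, because its elements are pairwise incomparable by definition; the hypothesis that $P$ has no infinite anti-chain therefore forces $\max(C)$ to be finite. Since the Scott closure of a point $x$ is $\downarrow x$ and closure commutes with finite unions,
\[
cl(\max(C))=\bigcup_{x\in\max(C)}\downarrow x=\downarrow\max(C)=C .
\]
The empty closed set equals $cl(\emptyset)$, so in every case $C=cl(F)$ for some $F\subseteq_{fin}P$, and Proposition~\ref{co-noe} gives that $(P,\sigma(P))$ is co-Noetherian.

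I do not expect a genuine obstacle: the argument is just a combination of three elementary facts (the d-space max-decomposition of closed sets, finiteness of anti-chains, and the description of the Scott closure of a finite set). The only point deserving a little care is the appeal to the d-space result itself; if one prefers a self-contained derivation of $C=\downarrow\max(C)$, the key remark is that a Scott-closed $C$ is a lower set closed under existing directed joins, hence a dcpo in the induced order, so every chain in $\uparrow x\cap C$ has an upper bound in $\uparrow x\cap C$; Zorn's Lemma then yields a maximal element of $\uparrow x\cap C$, which is automatically maximal in $C$, showing $x\in\downarrow\max(C)$.
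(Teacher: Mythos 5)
Your proposal is correct and follows essentially the same route as the paper: the paper's proof is exactly the observation that a Scott-closed set $C$ satisfies $C=\downarrow\max(C)=cl(\max(C))$ with $\max(C)$ a finite anti-chain, combined with Proposition~\ref{co-noe}(5). The extra details you supply (the Zorn's Lemma justification of $C=\downarrow\max(C)$, the empty-set case) are correct but only make explicit what the paper leaves implicit.
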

\begin{proof}
	Let $L$ be a dcpo and $C$ be a Scott closed set of $L$. Then $C=\downarrow\max(C)=cl(\max(C))$. By hypothesis, $\max(C)$ is finite. Thus $L$ endowed with Scott topology is a co-Noetherian space.
\end{proof}

Then we obtain some equivalent characterizations for the compactness of the strong topology as follows.

\begin{theorem}\label{str-compact-1}
	Let $(X,\tau)$ be a topological space. Then $X$ endowed with strong topology $\tau^s$ of $(X,\tau)$ is compact if and only if $(X,\tau)$ is a Noetherian and a co-Noetherian space.
\end{theorem}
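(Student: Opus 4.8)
The plan is to exploit the fact that a subbasic $\tau^s$-open set is either a $\tau$-open set or a $\tau$-closed set, and that a finite subcover of $X$ can be assembled by treating these two families separately: Noetherianity handles the $\tau$-open part, and co-Noetherianity handles the $\tau$-closed part. Throughout I will use the closed-cover reformulation of co-Noetherianity (``$C\subseteq\bigcup\mathcal{E}$ with $C$ and all members of $\mathcal{E}$ closed implies $C\subseteq\bigcup\mathcal{E}_0$ for some finite $\mathcal{E}_0$''), which is exactly Proposition \ref{co-noe}(2) read through complements, as noted in the Remark following it, together with the definition of Noetherian as ``all open sets are compact'' recalled there.

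For the forward direction, I assume $(X,\tau^s)$ is compact. To see $(X,\tau)$ is Noetherian I show each $V\in\tau$ is $\tau$-compact: given a $\tau$-open cover $\{U_i:i\in I\}$ of $V$, the family $\{X\setminus V\}\cup\{U_i:i\in I\}$ consists of $\tau^s$-open sets (the first is $\tau$-closed, the rest $\tau$-open) and covers $X$, so a finite subcover of it yields finitely many $U_i$ covering $V$. For co-Noetherianity I argue dually: given a $\tau$-closed set $C$ and $\tau$-closed sets $\{C_j:j\in J\}$ with $C\subseteq\bigcup_{j\in J}C_j$, the family $\{X\setminus C\}\cup\{C_j:j\in J\}$ is again a $\tau^s$-open cover of $X$, and a finite subcover gives $C\subseteq C_{j_1}\cup\cdots\cup C_{j_m}$; by the closed-cover form of co-Noetherianity this shows $(X,\tau)$ is co-Noetherian.

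For the converse, I assume $(X,\tau)$ is both Noetherian and co-Noetherian. By Alexander's subbase lemma it suffices to extract a finite subcover from an arbitrary cover of $X$ by subbasic $\tau^s$-open sets, say $X=\bigcup_{i\in I}U_i\cup\bigcup_{j\in J}C_j$ with each $U_i\in\tau$ and each $C_j\in\Gamma(X)$. Set $U=\bigcup_{i\in I}U_i\in\tau$; since $(X,\tau)$ is Noetherian, $U$ is $\tau$-compact, so $U=U_{i_1}\cup\cdots\cup U_{i_n}$ for finitely many indices. The set $X\setminus U$ is $\tau$-closed and satisfies $X\setminus U\subseteq\bigcup_{j\in J}C_j$, so co-Noetherianity (closed-cover form) yields $X\setminus U\subseteq C_{j_1}\cup\cdots\cup C_{j_m}$. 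Combining, $X=U_{i_1}\cup\cdots\cup U_{i_n}\cup C_{j_1}\cup\cdots\cup C_{j_m}$ is the desired finite subcover, so $(X,\tau^s)$ is compact.

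The argument is essentially a bookkeeping argument, and I do not anticipate a substantial obstacle; the two points requiring care are the appeal to Alexander's subbase lemma in the converse (a direct argument with basic open sets $U\cap C$ is possible but less transparent, since the open and closed parts no longer separate cleanly) and the correct translation between the open-set formulation of co-Noetherianity in the Definition and the closed-cover formulation used above, for which I will simply cite Proposition \ref{co-noe} and the accompanying Remark.
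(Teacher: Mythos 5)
Your proof is correct and follows essentially the same route as the paper: both directions rest on the observation that $\tau$-open sets and $\tau$-closed sets are respectively $\tau^s$-closed and $\tau^s$-open, and the converse uses the Alexander subbase lemma with the identical split of the subbasic cover into its open and closed parts. The only cosmetic difference is in the forward direction, where the paper verifies co-Noetherianity via the point-closure cover $\{cl(\{x\}):x\in C\}$ to land on condition (5) of Proposition \ref{co-noe}, while you verify the complemented form of condition (2) directly; both are equivalent characterizations.
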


\begin{proof}
	Suppose $X$ endowed with strong topology $\tau^s$ is compact. 
	
	Then all closed sets of $(X,\tau^s)$ are compact in $(X,\tau^s)$. Hence, all open sets $(X,\tau)$ are compact in $(X,\tau^s)$. It follows that $(X,\tau)$ is Noetherian.  
	
	Given a closed set $C$ of $(X,\tau)$. Then it is a compact closed set in $(X,\tau^s)$. Clearly, $\{cl(\{x\}):x\in C\}$ is an open cover of $C$ in $(X,\tau^s)$. There is a finite set $F\subseteq C$ such that $C=\bigcap\{cl(\{x\}):x\in F\}=cl(F)$. Thus $X$ is co-Noetherian by Proposition \ref{co-noe}. 
	
	Conversely, suppose $(X,\tau)$ is a Noetherian and a co-Noetherian space. Let $\{U_i\in\tau:i\in I\}\cup\{C_j\in\Gamma(X,\tau):j\in J\}$ be an open cover of $X$ in $(X,\tau^s)$. Since $(X,\tau)$ is a Noetherian space, there is a finite set $F_1\subseteq I$ such that $\bigcup\{U_i:i\in I\}= \bigcup\{U_i:i\in F_1\}$. Hence 
	$$X-\bigcup\{C_j:j\in J\}=\bigcap\{X-C_j:j\in J\}\subseteq \bigcup\{U_i:i\in F_1\}.$$ 
	Because $(X,\tau)$ is a co-Noetherian space, there is a finite set $F_2\subseteq J$ such that $\bigcap\{X-C_j:j\in F_2\}\subseteq \bigcup\{U_i:i\in F_1\}$. Thus $X=\bigcup\{U_i:i\in F_1\}\cup\bigcup\{C_j:j\in F_2\}$. By Alexander subbase theorem, $(X,\tau)$ with the strong topology is compact.
\end{proof}

\begin{proposition}\label{co-noe-well}
	For a $T_0$ Noetherian space $(X,\tau)$, it is a co-Noetherian space if and only if it is open well-filtered.
\end{proposition}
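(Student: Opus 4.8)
Here is the plan. The whole proof rests on one observation: in a Noetherian space the way-below relation on the lattice of open sets collapses to ordinary inclusion, and once that is noted the two properties in the statement become literally the same condition.

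\emph{Step 1: show that for a Noetherian space $(X,\tau)$ and $U,V\in\tau$ one has $U\ll V$ iff $U\subseteq V$.} The implication $U\ll V\Rightarrow U\subseteq V$ holds in any space: $\{V\}$ is itself an open cover of $V$, and a finite subfamily of $\{V\}$ covering $U$ forces $U\subseteq V$. For the converse, suppose $U\subseteq V$; since $(X,\tau)$ is Noetherian, $V$ is compact, so any open cover of $V$ has a finite subcover of $V$, and that same finite subfamily covers $U$ because $U\subseteq V$. Hence $U\ll V$. It follows that a subfamily $\mathcal F\subseteq\tau$ is $\ll$-filtered exactly when for all $U_1,U_2\in\mathcal F$ there is $U_3\in\mathcal F$ with $U_3\subseteq U_1\cap U_2$, i.e.\ exactly when $\mathcal F$ is a filter basis of open sets.

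\emph{Step 2: conclude both implications.} For ``co-Noetherian $\Rightarrow$ open well-filtered'': given a $\ll$-filtered family $\mathcal F\subseteq\tau$ and $U\in\tau$ with $\bigcap\mathcal F\subseteq U$, by Step~1 the family $\mathcal F$ is a filter basis of open sets, so the defining property of a co-Noetherian space yields $V\in\mathcal F$ with $V\subseteq U$. (This direction in fact uses only $U\ll V\Rightarrow U\subseteq V$, not Noetherianity.) For ``open well-filtered $\Rightarrow$ co-Noetherian'': given $V\in\tau$ and a filter basis $\mathcal U$ of open sets with $\bigcap\mathcal U\subseteq V$, Step~1 shows $\mathcal U$ is $\ll$-filtered, so open well-filteredness provides $U\in\mathcal U$ with $U\subseteq V$; this is precisely the definition of co-Noetherian, and here the Noetherian hypothesis is genuinely used.

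\emph{Expected difficulty.} There is no real obstacle: the argument is a matching of definitions through the equality $\ll\;=\;\subseteq$ of Step~1. The only points needing a little care are this coincidence of relations in the Noetherian setting and the trivial edge cases (the empty family, or $U=\emptyset$), all of which are handled by the conventions already in force for $\ll$-filtered families and filter bases in the paper; one could alternatively phrase the argument through the characterization $\Gamma(X)=\{cl(F):F\subseteq_{fin}X\}$ of Proposition~\ref{co-noe}, but the direct route above is shorter.
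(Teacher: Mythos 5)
Your proof is correct, and half of it coincides with the paper's. For ``open well-filtered $\Rightarrow$ co-Noetherian'' the paper makes exactly the observation in your Step~1: Noetherianity puts $\tau\subseteq\mathcal{Q}(X)$, i.e.\ $U\ll U$ for every $U\in\tau$, so every filter basis of open sets is $\ll$-filtered and open well-filteredness applies; your finer statement $\ll\;=\;\subseteq$ on $\tau$ is just a slightly sharper packaging of the same point. Where you genuinely diverge is the forward direction: the paper deduces it from Corollary~\ref{co-noe-sober} ($T_0$ co-Noetherian spaces are sober) together with the standard chain sober $\Rightarrow$ well-filtered $\Rightarrow$ open well-filtered, whereas you argue directly that any $\ll$-filtered family is a filter basis (using only the trivial implication $U\ll V\Rightarrow U\subseteq V$, valid in every space) and then invoke the definition of co-Noetherian. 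Your route is more elementary and self-contained, and it makes explicit that this direction needs neither the Noetherian nor the $T_0$ hypothesis; the paper's route is shorter on the page only because the sobriety corollary and the implications among those separation-type properties have already been established. The edge cases you flag (the empty family, $\emptyset$ as a member) are indeed only a matter of convention and do not affect either argument.
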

\begin{proof}
	It is well-known in general topology, all sober spaces are well-filtered and all well-filtered spaces are open well-filtered. By Corollary \ref{co-noe-sober}, all $T_0$ co-Noetherian spaces are open well-filtered.
	
	Conversely, since $(X,\tau)$ is Noetherian, we have $\tau\subseteq \mathcal{Q}(X)$(i.e., $U\ll U$ in $(\tau,\subseteq)$, for each $U\in\tau$). The open well-filteredness of $(X,\tau)$ implies that $(X,\tau)$ is co-Noetherian.
\end{proof}

By Theorem \ref{str-compact-1} and Proposition \ref{co-noe-well}, we have the following corollary.

\begin{corollary}\label{str-compact-2}
	Let $(X,\tau)$ be a $T_0$ space. Then the following conditions are equivalent.
	\II
	\I[(1)] $(X,\tau^s)$ is compact.
	\I[(2)] $(X,\tau)$ is both a Noetherian space and a co-Noetherian space.
	\I[(4)] $(X,\tau)$ is both a Noetherian space and a well-filtered space.
	\I[(4)] $(X,\tau)$ is both a Noetherian space and an open well-filtered space.
	\III
\end{corollary}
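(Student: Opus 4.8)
The plan is to verify the stated equivalences by closing a short cycle of implications, drawing entirely on results already established. The equivalence between the compactness of $(X,\tau^s)$ and $(X,\tau)$ being simultaneously Noetherian and co-Noetherian is precisely the content of Theorem \ref{str-compact-1}, so that part is immediate and requires no further argument. It therefore suffices to prove, under the standing assumption that $(X,\tau)$ is a $T_0$ Noetherian space, that the three conditions ``co-Noetherian'', ``well-filtered'' and ``open well-filtered'' are equivalent.

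For the implication from co-Noetherian to well-filtered, I would invoke Corollary \ref{co-noe-sober}: every $T_0$ co-Noetherian space is sober, and sobriety implies well-filteredness by the standard hierarchy of separation properties in non-Hausdorff topology. The step from well-filtered to open well-filtered is likewise a general fact already recalled in the proof of Proposition \ref{co-noe-well} (every well-filtered space is open well-filtered), the $\ll$-filtered families of open sets being controlled via the filtered families of compact saturated sets obtained from their saturations. Finally, to return from open well-filtered to co-Noetherian I would appeal to Proposition \ref{co-noe-well}: for a $T_0$ Noetherian space, open well-filteredness is equivalent to co-Noetherianity, the key point being that Noetherianity forces $U \ll U$ in $(\tau,\subseteq)$ for every open $U$, so that every filter basis of open sets is automatically a $\ll$-filtered family.

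Assembling these, one obtains the cycle $(2) \Rightarrow (3) \Rightarrow (4) \Rightarrow (2)$ among the last three conditions, together with $(1) \Leftrightarrow (2)$ from Theorem \ref{str-compact-1}, which yields the full equivalence. I do not expect any genuine obstacle here: the corollary is essentially a bookkeeping consequence of Theorem \ref{str-compact-1}, Corollary \ref{co-noe-sober} and Proposition \ref{co-noe-well}. The only point deserving care is that the Noetherian hypothesis is essential in the direction ``open well-filtered $\Rightarrow$ co-Noetherian'' (and hence for the implications out of $(4)$): without it, open well-filteredness is strictly weaker than co-Noetherianity, which is exactly why the conjunct ``Noetherian'' appears in each of the conditions $(2)$--$(4)$.
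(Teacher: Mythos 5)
Your proposal is correct and follows exactly the route the paper intends: $(1)\Leftrightarrow(2)$ is Theorem \ref{str-compact-1}, $(2)\Leftrightarrow(4)$ is Proposition \ref{co-noe-well}, and the well-filtered condition is slotted in via the chain co-Noetherian $\Rightarrow$ sober $\Rightarrow$ well-filtered $\Rightarrow$ open well-filtered, which is precisely the chain already invoked in the proof of Proposition \ref{co-noe-well}. Nothing further is needed.
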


The following lemma is excerpted from \cite[Lemma 9.7.9]{Goubault-2013}.

\begin{lemma}\label{gou}
	A space $X$ is Noetherian if and only if its sobrification is Noetherian.
\end{lemma}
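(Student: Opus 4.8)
The plan is to reduce the statement to the elementary fact that a space and its sobrification have isomorphic lattices of open sets. Recall that the sobrification $X^s$ of $(X,\tau)$ may be realized as $Irr_c(X,\tau)$, the set of irreducible closed subsets of $X$, topologized so that its open sets are precisely the sets $\widetilde U:=\{C\in Irr_c(X):C\cap U\neq\emptyset\}$ with $U\in\tau$. The first step is to record (this is standard; see e.g.\ \cite{Gier-2003}) that the assignment $U\mapsto\widetilde U$ is a lattice isomorphism from $(\tau,\subseteq)$ onto the lattice of open sets of $X^s$. It is obviously monotone. It is surjective: indeed the family $\{\widetilde U:U\in\tau\}$ is already closed under arbitrary unions, since $\widetilde{\bigcup_i U_i}=\bigcup_i\widetilde{U_i}$, and under finite intersections, since $\widetilde{U\cap V}=\widetilde U\cap\widetilde V$ (if an irreducible $C$ meets both $U$ and $V$, then $C\not\subseteq (X-U)\cup(X-V)=X-(U\cap V)$), so it constitutes the whole topology of $X^s$ rather than merely a base. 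And it is injective: if $\widetilde U=\widetilde V$, then for each $x\in U$ we have $cl(\{x\})\in\widetilde U=\widetilde V$, so $cl(\{x\})$ meets $V$, which forces $x\in V$ because $V$ is open; hence $U\subseteq V$, and symmetrically $U=V$.

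For the second step, I would use that Noetherianity of a space is a purely order-theoretic property of its open-set lattice: by the definition recalled in the Introduction together with the Remark following it, a space is Noetherian if and only if its (complete) lattice of open sets satisfies the ascending chain condition, equivalently if and only if every open set is a compact element of that lattice. Since $U\mapsto\widetilde U$ is an order isomorphism between $(\tau,\subseteq)$ and the open-set lattice of $X^s$, it carries ascending chains to ascending chains bijectively, and a chain in $\tau$ is eventually constant exactly when its image is. Therefore $(\tau,\subseteq)$ satisfies the ascending chain condition if and only if the open-set lattice of $X^s$ does, which is precisely the assertion that $X$ is Noetherian if and only if $X^s$ is Noetherian.

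If one wishes to bypass the isomorphism statement, the same reasoning works directly. Given an ascending chain $\widetilde{U_1}\subseteq\widetilde{U_2}\subseteq\cdots$ of open sets of $X^s$, monotonicity and injectivity of $U\mapsto\widetilde U$ produce an ascending chain $U_1\subseteq U_2\subseteq\cdots$ in $\tau$; if $X$ is Noetherian this chain stabilizes, hence so does the original one. Conversely, any ascending chain of open sets of $X$ maps forward to an ascending chain of open sets of $X^s$, so Noetherianity of $X^s$ yields Noetherianity of $X$.

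I do not anticipate a real obstacle: the one place that calls for care is verifying that $U\mapsto\widetilde U$ is a bijection onto the full topology of $X^s$ — specifically the injectivity argument through the specialization order, and the observation that the sets $\widetilde U$ are closed under finite intersection, so that they already form a topology and not just a base. Once the two open-set lattices are identified, both directions of the equivalence are immediate.
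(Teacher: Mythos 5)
Your argument is correct: the map $U\mapsto\widetilde U$ is indeed an order isomorphism from $\tau$ onto the topology of the sobrification (the intersection step using irreducibility and the injectivity step via $cl(\{x\})$ are both sound), and since Noetherianity is the ascending chain condition on the open-set lattice, it transfers across this isomorphism. The paper does not supply its own proof — it cites the result as \cite[Lemma 9.7.9]{Goubault-2013} — but the route you take, identifying the two frames of opens, is the standard one for this fact, so there is nothing further to compare.
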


By Corollary \ref{str-compact-2}, we can directly obtain the following corollary.

\begin{corollary}
	A space $X$ is Noetherian if and only if its well-filterification is Noetherian.
\end{corollary}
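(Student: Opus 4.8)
The plan is to reduce the statement to Lemma \ref{gou} (and, if one wishes, Corollary \ref{str-compact-2}) by using the standard placement of the well-filterification between a space and its sobrification. First I would recall that for a $T_0$ space $X$ the canonical map $\eta\colon X\to X^{w}$ is a topological embedding and that $X^{w}$ can be realised as a subspace of the sobrification $X^{s}$ with $\eta(X)\subseteq X^{w}\subseteq X^{s}$ and $\eta(X)$ dense in $X^{s}$. From this two facts follow that drive the proof: (a) each of the inclusions $X\hookrightarrow X^{w}\hookrightarrow X^{s}$ induces an isomorphism of the lattices of open sets (the sobrification case is classical; for $X^{w}$ use that $V\mapsto V\cap X^{w}$ maps $\mathcal{O}(X^{s})$ onto $\mathcal{O}(X^{w})$ and is injective because $\mathcal{O}(X^{s})\to\mathcal{O}(\eta(X))$ is already a bijection); and (b) $X^{s}$ is also the sobrification of $X^{w}$, i.e.\ $(X^{w})^{s}=X^{s}$.

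Granting these, the corollary is essentially a one-liner. Noetherianity means that the lattice of open sets satisfies the ascending chain condition, so it is preserved and reflected by the frame isomorphism $\mathcal{O}(X)\cong\mathcal{O}(X^{w})$; hence $X$ is Noetherian iff $X^{w}$ is. Equivalently, by Lemma \ref{gou} applied to $X$ and then to $X^{w}$: $X$ is Noetherian $\iff$ $X^{s}$ is Noetherian $=(X^{w})^{s}$ is Noetherian $\iff$ $X^{w}$ is Noetherian. If one prefers a purely topological phrasing, for the ``only if'' direction $X^{s}$ is Noetherian by Lemma \ref{gou} and $X^{w}$ is a subspace of it (subspaces of Noetherian spaces are Noetherian), while for the ``if'' direction $X$ is a subspace of $X^{w}$. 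Corollary \ref{str-compact-2} can be invoked here to re-express ``Noetherian'' on the well-filtered space $X^{w}$ as ``$(X^{w},(\tau^{w})^{s})$ is compact'' or as ``co-Noetherian'', but it is not strictly needed for the chain of equivalences.

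The only genuine obstacle is the structural bookkeeping about $X^{w}$: one must be comfortable identifying $X^{w}$ with a subspace of $X^{s}$ lying above $\eta(X)$, so that the frame isomorphism (equivalently $(X^{w})^{s}=X^{s}$) is legitimate. Once that is in place there is nothing to compute — in particular no argument with filter bases or the Alexander subbase theorem is required — so I would keep the written proof to the short chain of equivalences via Lemma \ref{gou} rather than unfolding the lattice-theoretic content.
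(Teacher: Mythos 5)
Your proof is correct, and half of it coincides with the paper's: for ``$X$ Noetherian $\Rightarrow X^{w}$ Noetherian'' the paper argues exactly as you do, via $X^{s}\cong (X^{w})^{s}$ and two applications of Lemma \ref{gou}. Where you diverge is the other direction. The paper does not use $(X^{w})^{s}=X^{s}$ there; instead it invokes its own machinery (Corollary \ref{str-compact-2} together with Corollary \ref{co-noe-sober}) to conclude that a Noetherian well-filtered space is already sober, so that $X^{w}$ \emph{is} the sobrification of $X$, and then applies Lemma \ref{gou} once. You treat both directions symmetrically through the identification of sobrifications, which is cleaner and makes Corollary \ref{str-compact-2} unnecessary, at the cost of having to justify the structural facts about the well-filtered reflection that you flag yourself: that $\eta(X)\subseteq X^{w}\subseteq X^{s}$ with $\mathcal{O}(X)\cong\mathcal{O}(X^{w})\cong\mathcal{O}(X^{s})$, equivalently $(X^{w})^{s}=X^{s}$. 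These are standard properties of the well-filterification (and the paper itself silently uses ``the sobrifications of $X$ and $X^{w}$ are homeomorphic'' in its converse direction), so this is a legitimate dependency rather than a gap, but a citation to \cite{Shen-2019} or \cite{Liu-2020} would be appropriate. Your frame-isomorphism one-liner is in fact the most economical version: since Noetherianity is the ascending chain condition on $\mathcal{O}(X)$, the isomorphism $\mathcal{O}(X)\cong\mathcal{O}(X^{w})$ settles the corollary without Lemma \ref{gou} at all; the paper's route, by contrast, is designed to showcase the co-Noetherian results it has just proved.
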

\begin{proof}
	If the well-filterification of $X$ is a Noetherian, then it is sober, by Corollary \ref{str-compact-2}. It means that the sobrification is homeomorphic to the well-filterification. So $X$ is Noetherian space by Lemma \ref{gou}.
	
	Conversely, assume that $X$ is a Noetherian space and $X^w$ is the well-filterification space of $X$. Then the sobrification of $X$ and $X^w$ are homeomorphism. And the sobrification of $X$ is Noetherian by Lemma \ref{gou}. Hence, the sobrification of $X^w$ is also Noetherican. It implies that $X^w$ is Noetherian by Lemma \ref{gou}.
\end{proof}

Next, we demonstrate that there is no infinite $T_1$ co-Noetherian space. 

\begin{proposition}\label{t1 co-noe}
	There is no infinite $T_1$ co-Noetherian space.
\end{proposition}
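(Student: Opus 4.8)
The plan is to reduce everything to the combinatorial characterization $(1)\Leftrightarrow(5)$ of Proposition \ref{co-noe}, namely that $(X,\tau)$ is co-Noetherian iff $\Gamma(X)=\{cl(F):F\subseteq_{fin}X\}$. So suppose, toward a contradiction, that $(X,\tau)$ is an infinite $T_1$ space that is co-Noetherian.

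First I would use the $T_1$ hypothesis: in a $T_1$ space $cl(\{x\})=\{x\}$ for every $x\in X$, and hence $cl(F)=\bigcup_{x\in F}cl(\{x\})=F$ for every finite subset $F\subseteq X$. Therefore the right-hand side of characterization $(5)$ collapses to the family of finite subsets of $X$, and co-Noetherianity forces $\Gamma(X)=\{F:F\subseteq_{fin}X\}$; that is, \emph{every} closed subset of $X$ is finite. Now apply this to the closed set $X$ itself: since $X\in\Gamma(X)$, we conclude $X$ is finite, contradicting the assumption. This completes the argument.

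The only point requiring (minimal) care is the two standard facts being invoked — that $T_1$ makes singletons closed, so closures of finite sets are the sets themselves, and that the whole space is always a closed subset of itself — together with the already-established equivalence in Proposition \ref{co-noe}; there is no genuine obstacle, as the statement is essentially immediate once characterization $(5)$ is in hand. (One could alternatively argue directly from the definition by testing the filter basis of cofinite open sets against a cofinite open target, but routing through Proposition \ref{co-noe}(5) is cleaner.)
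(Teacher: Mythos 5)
Your argument is correct and is essentially the same as the paper's: both reduce to characterization (5) of Proposition \ref{co-noe}, observe that $T_1$ forces $cl(F)=F$ for finite $F$, and conclude that the infinite closed set $X$ itself cannot be the closure of a finite set. No further comment is needed.
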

\begin{proof}
	Let $X$ be an infinite set and $(X,\tau)$ be a $T_1$ space. Then $X\not=cl(F)$ for each $F\subseteq_{fin}X$. Thus $(X,\tau)$ is not a co-Noetherian space.
\end{proof}

However, there are two similar properties for $T_1$ spaces and co-Noetherian spaces. 

\begin{proposition}
Let $X,Y$ be two $T_0$ spaces. If $Y$ is a $T_1$ space and there is a continuous injective map from $X$ to $Y$, then $X$ is also a $T_1$ space.
\end{proposition}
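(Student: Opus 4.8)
The plan is to prove the contrapositive-free, direct statement that every singleton of $X$ is closed, using only one standard fact: a continuous map $f$ satisfies $f(cl(A))\subseteq cl(f(A))$ for every $A\subseteq X$, or equivalently, $f$ is monotone for the specialization orders, i.e. $x\le y$ in $X$ implies $f(x)\le f(y)$ in $Y$. No deeper machinery from the preceding sections is needed.

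Concretely, I would fix $x\in X$, let $f\colon X\to Y$ denote the given continuous injection, and apply the fact above with $A=\{x\}$ to obtain $f(cl(\{x\}))\subseteq cl(\{f(x)\})$. Since $Y$ is $T_1$, the right-hand side is just $\{f(x)\}$, so $f(cl(\{x\}))\subseteq\{f(x)\}$. Injectivity of $f$ then forces $cl(\{x\})$ to contain at most one point, and since $x\in cl(\{x\})$ we conclude $cl(\{x\})=\{x\}$. As $x$ was arbitrary, every singleton of $X$ is closed, that is, $X$ is $T_1$. One may equally phrase this order-theoretically: if $x\le_\tau y$ in $X$ then $f(x)\le_\tau f(y)$ in $Y$; but the specialization order of a $T_1$ space is the identity, so $f(x)=f(y)$, whence $x=y$ by injectivity, and therefore $\le_\tau$ on $X$ is the identity, which (for a $T_0$ space, indeed for any space) means $X$ is $T_1$.

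I do not expect any genuine obstacle here; the argument is essentially a one-line consequence of continuity together with injectivity. The only subtleties worth flagging in the write-up are that injectivity is essential — the unique map from the Sierpi\'{n}ski space to the one-point space is a continuous map into a $T_1$ space whose domain is not $T_1$ — and that the hypothesis ``$X$ is $T_0$'' is not actually used, so the statement holds for an arbitrary space $X$ admitting a continuous injection into a $T_1$ space.
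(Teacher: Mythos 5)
Your argument is correct and is essentially the paper's: the paper simply observes that $\{x\}=f^{-1}(f(x))$ is the preimage of a closed singleton under a continuous map, while you push forward via $f(cl(\{x\}))\subseteq cl(\{f(x)\})$ and then collapse by injectivity — two phrasings of the same one-line idea. Your side remarks (injectivity is essential, and the $T_0$ hypothesis on $X$ is not needed) are accurate.
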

\begin{proof}
	Given $x\in X$. Since $f$ is injective, we have $\{x\}=f^{-1}(f(x))$. So $\{x\}$ is closed because $Y$ is $T_1$ and $f$ is continuous.
\end{proof}

\begin{proposition}
	Let $X,Y$ be two $T_0$ spaces. If $X$ is a co-Noetherian space and there is a continuous surjective map from $X$ to $Y$, then $Y$ is also a co-Noetherian space.
\end{proposition}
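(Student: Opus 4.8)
The plan is to reduce everything to the combinatorial characterization of co-Noetherian spaces furnished by Proposition \ref{co-noe}, specifically condition (5): a space $Z$ is co-Noetherian if and only if $\Gamma(Z)=\{cl(F):F\subseteq_{fin}Z\}$. Thus it suffices to show that every closed subset of $Y$ is the closure of a finite set. This seems far more convenient than working directly with filter bases of open sets, since continuity interacts well with closed sets and closures.

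Let $f\colon X\to Y$ denote the given continuous surjection and fix $C\in\Gamma(Y)$. First I would observe that $f^{-1}(C)$ is closed in $X$ by continuity of $f$. Since $X$ is co-Noetherian, Proposition \ref{co-noe}(5) provides a finite set $F\subseteq X$ with $f^{-1}(C)=cl_X(F)$. The candidate finite set in $Y$ is then $f(F)$, and the goal is to prove $C=cl_Y(f(F))$.

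For the inclusion $cl_Y(f(F))\subseteq C$: from $F\subseteq f^{-1}(C)$ we get $f(F)\subseteq f(f^{-1}(C))\subseteq C$, and since $C$ is closed, taking closures yields $cl_Y(f(F))\subseteq C$. For the reverse inclusion I would invoke surjectivity of $f$, which gives $C=f(f^{-1}(C))=f(cl_X(F))$; then the elementary fact that a continuous map satisfies $f(cl_X(A))\subseteq cl_Y(f(A))$ for every $A\subseteq X$ yields $C=f(cl_X(F))\subseteq cl_Y(f(F))$. Combining the two inclusions, $C=cl_Y(f(F))$ with $f(F)$ a finite subset of $Y$.

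Since $C\in\Gamma(Y)$ was arbitrary, we conclude $\Gamma(Y)=\{cl(G):G\subseteq_{fin}Y\}$, and Proposition \ref{co-noe} then gives that $Y$ is co-Noetherian. I do not expect any real obstacle in this argument; the only step that must be handled with care is the use of surjectivity to pass from $f^{-1}(C)=cl_X(F)$ back to $C=f(cl_X(F))$, together with the standard containment $f(cl_X(A))\subseteq cl_Y(f(A))$ for continuous $f$. (In fact the $T_0$ hypotheses on $X$ and $Y$ play no role here, but they are harmless to retain for consistency with the surrounding results.)
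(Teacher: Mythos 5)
Your proposal is correct and follows essentially the same route as the paper: pull back the closed set $C$, use Proposition \ref{co-noe}(5) to write $f^{-1}(C)=cl_X(F)$ for finite $F$, and show $C=cl_Y(f(F))$ using surjectivity and continuity. The paper verifies the inclusion $C\subseteq cl_Y(f(F))$ by checking that every open set meeting $C$ meets $f(F)$, whereas you use the equivalent formulation $f(cl_X(F))\subseteq cl_Y(f(F))$; this is only a cosmetic difference.
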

\begin{proof}
	Fix $C\in\Gamma(Y)$. Then $f^{-1}(C)\in\Gamma(X)$. It follows that $f^{-1}(C)=cl(F)$ for some $F\subseteq_{fin}X$. Assume $U\in \mathcal{O}(Y)$ with $U\cap C\not=\emptyset$. Then there exists $x\in f^{-1}(U\cap C)\subseteq f^{-1}(U)\cap f^{-1}(C)$ due to the fact that $f$ is surjevitve. So $f^{-1}(U)\cap F\not=\emptyset$. That is, $U\cap f(F)\not=\emptyset$ (i.e., $C=cl(f(F))$). It implies that $Y$ is a co-Noetherian space.
\end{proof}

In the following, we study the retractions and some sub-spaces of co-Noetherian spaces. 

\begin{corollary}
	Let $(Y,\nu)$, $(X,\tau)$ be two $T_0$ space such that $(X,\tau)$ is a retraction of $(Y,\nu)$. If $(Y,\nu)$ is a co-Noetherian space, then so is $(X,\tau)$.
\end{corollary}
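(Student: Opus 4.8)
The plan is to reduce this corollary to the proposition immediately preceding it, since a retraction always provides a continuous surjection in the required direction. Recall that $(X,\tau)$ being a retraction (retract) of $(Y,\nu)$ means there are continuous maps $s\colon X\to Y$ and $r\colon Y\to X$ with $r\circ s=\id_X$. The first step is simply to unwind this: from $r\circ s=\id_X$ it follows at once that $r$ is surjective, because every $x\in X$ satisfies $x=r(s(x))\in r(Y)$. Thus $r\colon Y\to X$ is a continuous surjective map between $T_0$ spaces.

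The second step is to invoke the previous proposition, which says that the continuous surjective image of a $T_0$ co-Noetherian space is again co-Noetherian. Since $(Y,\nu)$ is co-Noetherian by hypothesis and $r$ is a continuous surjection onto $(X,\tau)$, that proposition (applied with the roles of the two spaces exchanged and the map taken to be $r$) immediately gives that $(X,\tau)$ is co-Noetherian, as desired. So the whole argument is: extract surjectivity of $r$ from the definition of retraction, then quote the preceding result.

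There is essentially no obstacle here — the only thing worth writing out explicitly is the passage from ``retraction'' to ``continuous surjection''. If one prefers a self-contained argument mirroring the proof of the previous proposition, it runs as follows: given $C\in\Gamma(X)$, the set $r^{-1}(C)$ is closed in $Y$, hence by Proposition \ref{co-noe}(5) we have $r^{-1}(C)=cl(F)$ for some $F\subseteq_{fin}Y$; then $F\subseteq cl(F)=r^{-1}(C)$ gives $r(F)\subseteq C$, so $cl(r(F))\subseteq C$, while surjectivity of $r$ gives $C=r(r^{-1}(C))=r(cl(F))\subseteq cl(r(F))$ by continuity. Hence $C=cl(r(F))$ with $r(F)$ finite, and Proposition \ref{co-noe}(5) shows $(X,\tau)$ is co-Noetherian.
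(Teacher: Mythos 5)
Your proposal is correct and is exactly the intended argument: the paper states this as an immediate corollary of the preceding proposition, the only content being that a retract $X$ of $Y$ admits a continuous surjection $r\colon Y\to X$ (since $r\circ s=\id_X$), to which that proposition applies. Your optional self-contained argument via $C=r(r^{-1}(C))=r(cl(F))\subseteq cl(r(F))$ is also a valid rephrasing of the paper's proof of that proposition.
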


\begin{proposition}
	Let $(X,\tau)$ be a co-Noetherian space, $A$ be a saturated set and $C$ be a closed set. Then both $A$ and $C$ endowed with hereditary topologies are co-Noetherian.
\end{proposition}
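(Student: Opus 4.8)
The plan is to reduce everything to the characterization in Proposition~\ref{co-noe}(5): a space $Z$ is co-Noetherian precisely when $\Gamma(Z)=\{cl(F):F\subseteq_{fin}Z\}$, i.e. every closed subset of $Z$ is the closure, taken in $Z$, of a finite set. Throughout I will use two elementary facts: for a subspace $Y\subseteq X$ and any $B\subseteq Y$ one has $cl_Y(B)=cl_X(B)\cap Y$; and, with respect to the specialization preorder, $cl_X(\{x\})=\downarrow x$ for every $x\in X$, while the saturated subsets of $X$ are exactly the upper sets.

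First I would dispose of the closed subspace $C$. If $E$ is closed in $C$, then, $C$ being closed in $X$, the set $E$ is closed in $X$ as well, so co-Noetherianity of $X$ gives a finite $F\subseteq X$ with $E=cl_X(F)$; since $F\subseteq cl_X(F)=E\subseteq C$, we get $cl_C(F)=cl_X(F)\cap C=E$. Hence every closed subset of $C$ is the $C$-closure of a finite subset of $C$, and $C$ is co-Noetherian.

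For the saturated subspace $A$, I would take a closed set $E$ of $A$ and write $E=D\cap A$ with $D\in\Gamma(X)$. By co-Noetherianity of $X$ there is a finite $F\subseteq X$ with $D=cl_X(F)=\bigcup_{x\in F}\downarrow x$, so that $E=\bigcup_{x\in F}(\downarrow x\cap A)$. The decisive point is that if $\downarrow x\cap A\neq\emptyset$, say $a\in\downarrow x\cap A$, then $a\le x$ and, $A$ being an upper set, $x\in A$; thus only the indices $x\in F\cap A$ contribute to the union, and for such $x$ we have $\downarrow x\cap A=cl_X(\{x\})\cap A=cl_A(\{x\})$. Therefore $E=\bigcup_{x\in F\cap A}cl_A(\{x\})=cl_A(F\cap A)$ with $F\cap A$ finite, and $A$ is co-Noetherian by Proposition~\ref{co-noe}.

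The verifications of the two elementary facts, together with the bookkeeping involving finite unions of closures, are entirely routine; the one load-bearing observation is that a point whose down-set meets $A$ must itself lie in $A$. This is exactly where saturatedness (upperness) is used, and it is also the step that would fail for an arbitrary subspace, so I would highlight it as the heart of the argument.
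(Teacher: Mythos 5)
Your proof is correct and takes essentially the same route as the paper's: reduce to the characterization $\Gamma(Z)=\{cl(F):F\subseteq_{fin}Z\}$ from Proposition \ref{co-noe} and intersect the finite generating set with the subspace. You in fact supply the detail the paper leaves implicit — that saturatedness of $A$ (being an upper set) forces every point of $F$ whose down-set meets $A$ to lie in $A$, which is exactly what justifies $B=cl_A(A\cap F)$.
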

\begin{proof}
	Given a closed subset $B$ of $(A,\tau|_A)$. Then $B=B_0\cap A$ for some $B_0\in\Gamma(X,\tau)$. It follows that $B_0=cl_{X}(F)$ for some $F\subseteq_{fin}X$. Thus $B=cl_{A}(A\cap F)$. 
	
	The proof of closed sub-space is similar.
\end{proof}

The product of co-Noetherian spaces may not be co-Noetherian.

\begin{example}
	Let $L=\{a_i:i\in\mathbb{N}\}\cup\{\infty\}$ and $M=\{b_j:j\in\mathbb{N}\}\cup\{b_0\}$ be two posets, where $\le_L=\{(a_i,a_j):i\le j~in~\mathbb{N}\}\cup\{(a_i,\infty):i\in\mathbb{N}\}$ and $\le_M=\{(b_i,b_j):j\le i~in~\mathbb{N}\cup\{0\}\}$. Then $\Gamma(L,\sigma(L))=\{\downarrow a_i:i\in \mathbb{N}\}\cup\{\downarrow\infty\}$ and $\Gamma(M,\sigma(M))=\{\downarrow b_j:j\in\mathbb{N}\cup\{0\}\}$. Hence, both $(L,\sigma(L))$ and $(M,\sigma(M))$ are co-Noetherian spaces.
	
	Let $C=\{(a_i,b_j):i,j\in\mathbb{N},~i\le j~in~\mathbb{N}\}$. Now we prove that $C$ is a closed subset of $(L\times M,\sigma(L)\times \sigma(M))$.
	
	Clearly, $C$ is a lower set. Take $(a_i,b_j)\not\in C$, where $i\in \mathbb{N}$ and $j\in\mathbb{N}\cup\{0\}$. Then $j<i$ in $\mathbb{N}$ and $\uparrow (a_i,b_j)=\uparrow a_i\times \uparrow b_j\in\sigma(L)\times \sigma(M)$, which implies $\uparrow (a_i,b_j)\cap C=\emptyset$. Similarly, take $(\infty,b_j)\not\in C$ where $j\in\mathbb{N}\cup\{0\}$. Then $(a_{j+1},b_j)\not\in C$ and $(\infty,b_j)\in\uparrow (a_{j+1},b_j)$.
	
	Finally, we claim that there is no finite set $F$ satisfying $C=cl(F)$.
	
	Let $\{(a_{i_k},b_{j_k}):k\in F\}$ be a finite subset of $C$. Take $H=\max\{i_k:k\in F\}$. Then $(a_{H+1},b_{H+1})\in C$, but $(a_{H+1},b_{H+1})\not\in cl(F)$. Thus $(L\times M,\sigma(L)\times \sigma(M))$ is not a co-Noetherian space.
\end{example}

For a sup-complete poset $P$, $(P,\upsilon(P))$ is a strong R-space; however, it need not be co-Noetherian. Furthermore, for a $T_0$ space $X$, the Hare powerspace $P_H(X)$ need not be co-Noetherian.

\begin{example}
 Let $P$ be the set of all at most countably many binary strings endowed with the prefix order, $A$ be the set of all infinite strings and $B$ the set of all finite strings. Then $A=\bigcap\{\downarrow F:F\subseteq_{fin}B\}$ is closed. But $A$ does not have finite maximal elements. Thus, $(P,\upsilon(P))$ is not a co-Noetherian space.
 
 In addition, $P_H(P,\upsilon(P))$ is not a co-Noetherian space since the set of all infinite binary strings is closed without finite maximal elements.
\end{example}

Finally, we prove that the category of co-Noetherian spaces with continuous maps is equivalent to some sub-category of \textbf{DCPO} and discuss some properties of the Smyth powerspace for co-Noetherian spaces. 

\begin{definition}
	Let $L$ be a \textbf{dcpo} and $C$ be an anti-chain of $L$. A finite anti-chain $F\subseteq L$ is called a $cover$ of $C$, if $C\subseteq \downarrow F$ and $C\subseteq\downarrow G$ implies that $F\subseteq\downarrow G$ for each $G\subseteq_{fin}L$.
\end{definition}

\begin{definition}
	A \textbf{dcpo} $L$ is called a $controllable$ \textbf{dcpo}, if $C$ has a cover for each anti-chain $C$.
\end{definition}

\begin{definition}
	Let $L,M$ be two \textbf{dcpo}s. A map $f$ from $L$ to $M$ is called an $upper$-$continuous$ map, if $f$ is monotone and $f^{-1}(\downarrow y)$ has a finite number of maximal points, for each $y\in M$.
\end{definition}

   Now we consider the following categories:
   
   \begin{itemize}
   	\item \textbf{Co-NOE}: the category of all $T_0$ co-Noetherian spaces with continuous maps.
   	\item \textbf{C-DCPO}: the category of all controllable \textbf{dcpo}s with upper-continuous maps.
   \end{itemize}
   
   \begin{theorem}\label{c-dcpo}
   	The categories \textbf{Co-NOE} and \textbf{C-DCPO} are equivalent.
   \end{theorem}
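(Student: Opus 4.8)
\textbf{Proof proposal for Theorem \ref{c-dcpo}.}

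The plan is to exhibit an explicit pair of functors and show they are mutually (naturally) inverse up to natural isomorphism, using the specialization-order machinery already set up in the preliminaries. Define $\Sigma\colon\textbf{C-DCPO}\to\textbf{Co-NOE}$ by sending a controllable \textbf{dcpo} $L$ to $(L,\sigma(L))$ and an upper-continuous map $f$ to $f$ itself; going the other way, define $\mathrm{Spec}\colon\textbf{Co-NOE}\to\textbf{C-DCPO}$ by sending a $T_0$ co-Noetherian space $(X,\tau)$ to $(X,\le_\tau)$ and a continuous map to itself. The bulk of the work is checking that these are well defined on objects and morphisms; once that is done, the natural isomorphisms are the identity (for $\mathrm{Spec}\circ\Sigma$, since the specialization order of $\sigma(L)$ recovers $L$) and, for $\Sigma\circ\mathrm{Spec}$, the fact that a $T_0$ co-Noetherian space carries the Scott topology of its specialization order — this last point needs an argument, see below.

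First I would verify the object-level claims. For $\Sigma$: given a controllable \textbf{dcpo} $L$, every Scott-closed set $C$ satisfies $C=\downarrow\max(C)$ (it is a d-space, Proposition on d-spaces in the preliminaries), and $\max(C)$ is an anti-chain, so by controllability it has a finite cover $F$; then $C\subseteq\downarrow F$ and $F\subseteq\downarrow\max(C)=C$ force $C=\downarrow F=cl(F)$, whence $(L,\sigma(L))$ is co-Noetherian by Proposition \ref{co-noe}(5). Conversely, for $\mathrm{Spec}$: if $(X,\tau)$ is $T_0$ co-Noetherian, it is sober (Corollary \ref{co-noe-sober}), hence a d-space, so $(X,\le_\tau)$ is a \textbf{dcpo}; given an anti-chain $C$, its down-closure $\downarrow C$ need not be closed, so I would instead work with $cl(C)$ — by co-Noetherianity $cl(C)=cl(F)=\downarrow\max(\downarrow F)$ for some finite $F$, and I would extract from $F$ (by deleting non-maximal-in-$cl(C)$ elements and those not above anything in $C$) a finite anti-chain that is a cover of $C$ in the sense of the definition; checking the universal property ($C\subseteq\downarrow G\Rightarrow F\subseteq\downarrow G$) uses that every point of $cl(C)$ lies below some point of $C$ when $C$ is an anti-chain that is also the set of maximal points of its closure — this reduction is the first delicate point.

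Next I would handle morphisms. A continuous map between $T_0$ spaces is monotone for the specialization orders, and for $f\colon(X,\tau)\to(Y,\nu)$ continuous with $Y$ co-Noetherian and $C=cl(y)$, the set $f^{-1}(\downarrow y)=f^{-1}(cl(y))$ is closed in $X$, hence by co-Noetherianity of $X$ equals $cl(F)=\downarrow\max(cl(F))$ for finite $F$, giving finitely many maximal points; so $f$ is upper-continuous, and $\mathrm{Spec}$ is well defined on arrows. For the other direction, I must show an upper-continuous $f\colon L\to M$ between controllable \textbf{dcpo}s is Scott-continuous: monotonicity gives preservation of directed sups up to $\le$ in one direction; for the reverse I would use that $f^{-1}(\downarrow y)$ has finitely many maximal points to show $f^{-1}(U)$ is Scott-open for each $U\in\sigma(M)$ — concretely, $M\setminus U$ is a union of principal ideals $\downarrow y$, and I would need to see that the preimage of a Scott-closed set is Scott-closed, reducing via co-Noetherianity of $(M,\sigma(M))$ to the case of finitely many $\downarrow y$'s and then using the finite-maximal-points hypothesis to conclude the preimage is a d-space-closed set that is also Scott-closed. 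This verification that upper-continuous $\Leftrightarrow$ Scott-continuous (between controllable \textbf{dcpo}s) is the main obstacle: the forward direction is routine, but the converse requires carefully combining the controllability of the \emph{codomain} with the finite-fiber condition, and one must be sure the argument does not secretly need controllability of the domain.

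Finally, with well-definedness in hand, I would record that $\mathrm{Spec}\circ\Sigma=\mathrm{id}_{\textbf{C-DCPO}}$ on the nose (the specialization order of $(L,\sigma(L))$ is $L$, and the map is unchanged), and that $\Sigma\circ\mathrm{Spec}\cong\mathrm{id}_{\textbf{Co-NOE}}$ via the identity function on points — the content being that for a $T_0$ co-Noetherian space $(X,\tau)$ one has $\tau=\sigma(X,\le_\tau)$. The inclusion $\tau\subseteq\sigma(X,\le_\tau)$ holds for every $T_0$ sober space (d-space); for the reverse, given a Scott-open $U$, its complement $C$ is a lower set, closed under directed sups, with $C=\downarrow\max(C)$; co-Noetherianity forces $\max(C)$ finite, so $C=\downarrow\{x_1,\dots,x_n\}=\bigcup cl(x_i)$ is $\tau$-closed, hence $U\in\tau$. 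This closes the equivalence. Throughout, naturality of the isomorphisms is immediate since every component is an identity function and every functor acts as the identity on the underlying map of a morphism.
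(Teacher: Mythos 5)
There is a genuine gap: you build the functor from \textbf{C-DCPO} to \textbf{Co-NOE} with the \emph{Scott} topology, $L\mapsto (L,\sigma(L))$, whereas the paper uses the \emph{upper} topology, $L\mapsto (L,\upsilon(L))$, and this choice is not cosmetic. Your argument that $(L,\sigma(L))$ is co-Noetherian hinges on the step ``$F\subseteq\downarrow\max(C)=C$'' for a cover $F$ of the anti-chain $\max(C)$; but the definition of a cover only requires $C\subseteq\downarrow F$ and the minimality condition $C\subseteq\downarrow G\Rightarrow F\subseteq\downarrow G$ --- it does \emph{not} force $F$ to lie inside $\downarrow C$. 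Concretely, let $L=A\cup\{\bot,\top\}$ where $A$ is an infinite anti-chain, $\bot$ is a bottom and $\top$ a top element. This is a controllable dcpo: the cover of any infinite anti-chain $C\subseteq A$ is $\{\top\}$ (any finite $G$ with $C\subseteq\downarrow G$ must contain $\top$), and $\top\notin\downarrow C$. Now $A\cup\{\bot\}$ is Scott-closed (its directed subsets are trivial) but is not $cl(F)=\downarrow F$ for any finite $F$ not containing $\top$, so $(L,\sigma(L))$ is \emph{not} co-Noetherian and your functor $\Sigma$ is not well defined on objects. The same example kills your claimed isomorphism $\Sigma\circ\mathrm{Spec}\cong\mathrm{id}$: $(L,\upsilon(L))$ is a $T_0$ co-Noetherian space whose topology is strictly coarser than $\sigma(L,\le_{\upsilon})=\sigma(L)$, so the identity on points is not a homeomorphism. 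The flaw in your argument for $\tau=\sigma(X,\le_\tau)$ is that you apply co-Noetherianity (``$\max(C)$ is finite'') to a set $C$ that is only known to be Scott-closed, not $\tau$-closed; co-Noetherianity constrains only the sets in $\Gamma(X,\tau)$.

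The rest of your outline (sobriety of co-Noetherian spaces giving a dcpo, extracting a cover from $cl(C)=cl(F)$, continuous maps being upper-continuous because $f^{-1}(\downarrow y)=cl(F)$ has finitely many maximal points) matches the paper's argument and is fine. To repair the proof you should replace $\sigma(L)$ by $\upsilon(L)$ throughout: then every $\upsilon$-closed set is an intersection $\bigcap\{\downarrow F: E\subseteq\downarrow F,\ F\subseteq_{\fin}L\}$, the cover collapses this intersection to a single $\downarrow F_E$, and for a $T_0$ co-Noetherian space one indeed recovers $\tau=\upsilon(X,\le_\tau)$ because $\Gamma(X,\tau)=\{\downarrow F:F\subseteq_{\fin}X\}$ is exactly the lattice of $\upsilon$-closed sets. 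Correspondingly, the morphism condition to verify is continuity for the upper topologies (preimages of finite unions of principal ideals are of the form $\downarrow F$), not Scott continuity, which sidesteps the delicate ``upper-continuous $\Leftrightarrow$ Scott-continuous'' equivalence you flag as the main obstacle.
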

   \begin{proof}
   	Let $L$ be a controllable dcpo. We claim that $(L,\upsilon(L))$ is co-Noetherian. Indeed, we only need to prove that for each anti-chain $C$, the closure of $C$ has a finite number of maximal points. For each closed set $E$ of $(L,\upsilon(L))$, $E=\bigcap\{\downarrow F:E\subseteq \downarrow F,~F\subseteq_{fin}L\}$. Hence,
   	$$cl(C)=\bigcap\{\downarrow F:C\subseteq \downarrow F,~F\subseteq_{fin}L\}.$$
   	Let $F_E$ be the cover of $E$. Then $\downarrow F_E=\bigcap\{\downarrow F:C\subseteq \downarrow F,~F\subseteq_{fin}L\}$ by the definition of cover.
   	Thus $(L,\upsilon(L))$ is co-Noetherian.
   	
   	Let $L,M$ be two controllable dcpos and $f:L\rightarrow M$ be an upper-continuous map. Since $f$ is an upper-continuous map, for each $C\in\Gamma(M,\upsilon(M))$, $f^{-1}(C)=\downarrow F$ for some $F\subseteq_{fin}L$. Hence, $f$ is a morphism in \textbf{Co-NOE}.
   	
   	Conversely, let $(X,\tau)$ be a $T_0$ co-Noetherian space. By sobriety of $(X,\tau)$, $(X,\le_\tau)$ is a dcpo. Given an anti-chain $C$. Then $cl(C)=cl(F)=\downarrow_{\le_\tau} F$ for some anti-chain $F\subseteq_{fin}X$. For each $G\subseteq_{fin}L$, $C\subseteq\downarrow_{\le_\tau} G$ always implies that $cl(C)\subseteq\downarrow_{\le_\tau} G$. Hence, $F\subseteq \downarrow_{\le_\tau} G$. It follows that $F$ is the cover of $C$.
   	
   	Let $(X,\tau),(Y,\nu)$ be two $T_0$ co-Noetherian spaces and $f:X\rightarrow Y$ a continuous map. Clearly, $f$ is monotone from $(X,\le_\tau)$ to $(Y,\le_\nu)$. For each $y\in Y$, $f^{-1}(\downarrow_{\le_\nu} y)\in\Gamma(X)$ and $f^{-1}(\downarrow_{\le_\nu} y)=\downarrow_{\le_\tau} F$ for some $F\subseteq_{fin}X$. Thus $\max(F)$ is the set of maximal points of $f^{-1}(\downarrow_{\le_\nu}y)$.
   	
   	From the proof above, we can define two functors $F,G$ below:
   	
   	$$ F:\textbf{C-DCPO}\rightarrow \textbf{Co-NOE}\\
   	\left\{ 
   	\begin{array}{lc}
   		F(L)=(L,\upsilon(L)), & for~each~object~L~of~\textbf{C-DCPO} \\
   		F(f)=f, & for~each~morphisim~f~of~\textbf{C-DCPO}\\
   	\end{array}
   	\right.$$ 
   	
   	$$ G: \textbf{Co-NOE}\rightarrow \textbf{C-DCPO}\\
   	\left\{ 
   	\begin{array}{lc}
   		F(X)=(X,\le_{\tau}), & for~each~object~(X,\tau)~of~\textbf{Co-NOE} \\
   		G(f)=f, & for~each~morphisim~f~of~\textbf{Co-NOE}\\
   	\end{array}
   	\right.$$ 
   	
   	Thus \textbf{C-DCPO} and \textbf{Co-NOE} are equivalent by $F\circ G=id_{\textbf{C-DCPO}}$ and $G\circ F=id_{\textbf{Co-NOE}}$.
   \end{proof}
   
   \begin{remark}\label{smy-co-noe}
   	For each co-Noetherian space $(X,\tau)$, the topology of $P_S(X,\tau)$ happens to coincide with the upper topology on $(\mathcal{Q}^*(X),\supseteq)$. This follows because the collection $\{\vsqu C:C\in C\}$ itself is a base for the closed sets of $P_S(X)$ and
   	$$\vsqu C=\{K\in\mathcal{Q}^*(X):F\cap K\not=\emptyset\}=\bigcup_{x\in F}\{K\in\mathcal{Q}^*(X):\uparrow x\subseteq Q\}=\downarrow_{P_S(X)}\{\uparrow x:x\in F\}$$ 
   	for each $C=cl(F)=\in\Gamma^*(X)$.
   \end{remark}

   Notably, the Smyth powerspace of a co-Noetherian space may fail to be co-Noetherian. A counterexample will be given by applying Theorem \ref{c-dcpo}. In the following, $\mathbb{N}$ is the set of all positive integers.
   
   \begin{example}
   	Let $P=\mathbb{N}\cup\{\infty\}$ be a poset, where $x\le y$ if and only if $y=\infty$. Then we have the following conditions:
   	
   	(i) $(P,\upsilon(P))$ is a co-Noetherian space since $\Gamma(P,\upsilon(P))=\{\downarrow \infty\}\cup\{\downarrow F:F\subseteq_{fin}\mathbb{N}\}$.
   	
   	(ii) All saturated sets are compact. It is obvious because $\upsilon(P)\subseteq \tau_{cof}(P)$, where $\tau_{cof}(P)$ is the co-finite topology of $P$.
   	
   	(iii) $P_S(X)$ is the poset $(\mathcal{Q}^*(X),\supseteq)$ equipped with upper topology.
   	
   	(iv)  $\mathcal{A}=\{X-\{n\}:n\in\mathbb{N}\}$ is an anti-chain of $P_S(P,\upsilon(P))$ equipped with specialization order, but it does not have a cover. Suppose not, $\mathcal{F}$ is a cover of $\mathcal{A}$. Let $X=(P,\upsilon(P))$. Then there is a $K\in \mathcal{F}$ such that $\mathcal{A}\cap\downarrow_{P_S(X)}K$ is infinite. Given that $x,y\not\in K$ and set $\mathcal{G}=(\mathcal{F}-\{K\})\cup \{K\cup\uparrow x\}\cup \{K\cup\uparrow y\}$. Then $\mathcal{C}\subseteq \downarrow_{P_S(X)} \mathcal{G}$, but $\mathcal{F}\not\subseteq \downarrow_{P_S(X)}\mathcal{G}$, a contradiction. By Theorem \ref{c-dcpo}, $P_S(X)$ is not co-Noetherian.
   \end{example}

\section{Relations of some non-Hausdorff topologies}

In this section, we only consider $T_0$ spaces. From the discussion in the above sections, we obtain a diagram illustrating the classification relations among certain $T_0$ spaces. The implication relations here are not reversible.

	 	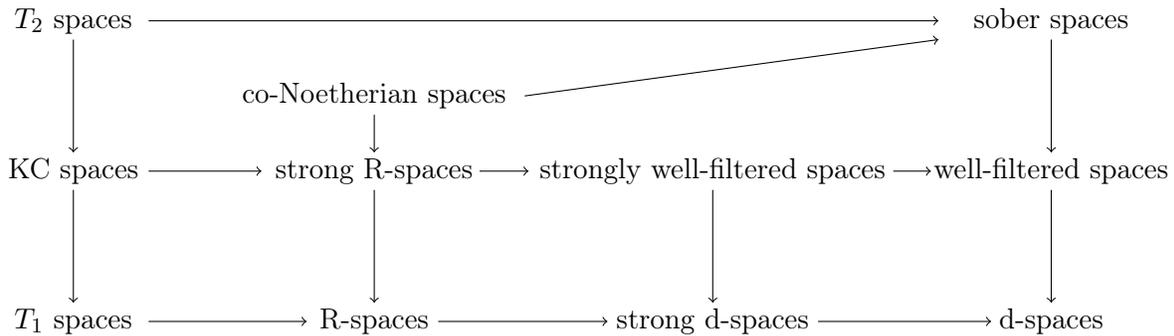
\begin{figure}[H]
	 	\centering
	 	\begin{tikzpicture}[scale=0.5]
	 		\path (10,6) node{sober spaces} coordinate (a);
	 		\path (10,2) node{well-filtered spaces} coordinate (b);
	 		\path (10,-2) node{d-spaces} coordinate (c);
	 		
	 		\path (1,2) node{strongly well-filtered spaces} coordinate (d);
	 		\path (1,-2) node{strong d-spaces} coordinate (e);
	 		
	 		\path (-8,4) node{co-Noetherian spaces} coordinate (f);
	 		\path (-8,2) node{strong R-spaces} coordinate (g);
	 		\path (-8,-2) node{R-spaces} coordinate (h);
	 		
	 		\path (-16,6) node{$T_2$ spaces} coordinate (i);
	 		\path (-16,2) node{KC spaces} coordinate (j);
	 		\path (-16,-2) node{$T_1$ spaces} coordinate (k);
	 		
	 		\draw [->] (-16,5.5) -- (-16,2.5);
	 		\draw [->] (-16,1.5) -- (-16,-1.5);
	 		\draw [->] (-14,6) -- (7,6);
	 		\draw [->] (-14,2) -- (-11,2);
	 		\draw [->] (-5.2,2) -- (-3.9,2);
	 		\draw [->] (5.8,2) -- (6.8,2);
	 		\draw [->] (-14,-2) -- (-9.8,-2);
	 		\draw [->] (-6.3,-2) -- (-1.8,-2);
	 		\draw [->] (3.8,-2) -- (8.4,-2);
	 		\draw [->] (-4,4) -- (7,5.5);
	 		\draw [->] (-8,3.5) -- (-8,2.5);
	 		\draw [->] (-8,1.5) -- (-8,-1.5);
	 		\draw [->] (1,1.5) -- (1,-1.5);
	 		\draw [->] (10,5.5) -- (10,2.5);
	 		\draw [->] (10,1.5) -- (10,-1.5);

	 	\end{tikzpicture}
	 	\caption{Classification Diagram of $T_0$ Spaces }\label{fig3}
	 \end{figure}
	We have added KC-spaces and co-Noetherian spaces to \cite[Figure 1.]{xu-yang-2025} in order to achieve a more refined classification of $T_0$ spaces.
	
	Several issues remain to be solved.
	
	\textbf{Question 5.1.} Is there a $T_1$ coherent well-filtered space but not a KC-space?
	
	\textbf{Question 5.2.} Is there a dcpo $L$ satisfying $\upsilon(L)=\sigma(L)$, but which fails to be controllable?
	
	\textbf{Question 5.3.} Let X be a $T_0$ space and Y a co-Noetherian space. Is the function space $[X\rightarrow Y]$ equipped with Isbell topology co-Noetherian?

	\section*{Reference}
	\bibliographystyle{plain}

\end{document}